\numberwithin{equation}{section}
\newtheorem{thm}{\bf Theorem}[section]
\newtheorem{lem}[thm]{\bf Lemma}
\newtheorem{cor}[thm]{\bf Corollary}
\newtheorem{prop}[thm]{\bf Proposition}
\theoremstyle{definition}
\newtheorem{ex}[thm]{Example}
\newtheorem{rem}[thm]{Remark}
\newtheorem*{thm*}{Theorem}
\def\NN{{\mathbb N}}
\def\PP{{\mathbb P}}
\def\mm{{\mathfrak m}}
\def\pp{{\mathfrak p}}
\def\a{{\alpha}}
\def\b{{\beta}}
\def\e{{\varepsilon}}
\DeclareMathOperator{\reg}{reg}
\DeclareMathOperator{\height}{ht}
\DeclareMathOperator{\depth}{depth}
\begin{document}

\title{Buchsbaumness and Castelnuovo-Mumford regularity of non-smooth monomial curves}

\author{Tran Thi Gia Lam}
\address{Phu Yen University, Tuy Hoa, Vietnam}
\email{tranthigialam@pyu.edu.vn}

\author[Trung]{Ngo Viet Trung}
\address{International Centre for Research and Postgraduate Training\\ Institute of Mathematics \\ Vietnam Academy of Science and Technology, Hanoi, Vietnam}
\email{nvtrung@math.ac.vn}

\subjclass[2010]{Primary 13F65, 14H20; Secondary 13D45, 13H10, 14B15}
\keywords{Monomial curves, rings generated by monomials, finite Macaulayfication, Cohen-Macaulay ring, Buchsbaum ring, Castelnuovo-Mumford regularity, reduction number}

% \thanks{This paper is supported by grant 101.04-2019.313 of Vietnam National Foundation for Science and Technology Development.}

\begin{abstract}
Projective monomial curves correspond to rings generated by monomials of the same degree in two variables. Such rings always have finite Macaulayfication. We show how to characterize the Buchsbaumness and the Castelnuovo-Mumford regularity of these rings by means of their finite Macaulayfication, and we use this method to study the Buchsbaumness and to estimate the Castelnuovo-Mumford regularity of large classes of non-smooth monomial curves in terms of the given monomials.
\end{abstract}

\maketitle
%%%%%%%%%%%%%%%%%%%%%%%%%%%%%%%%%%%%%%%%%%%%%

%%%%%%%%%%%%%%%%%%%%%%%%

\section*{Introduction}

Let $k[x,y]$ be a polynomial ring over a field $k$ in two variables $x,y$.
Let $R = k[M]$ be the subring of $k[x,y]$ generated by a set $M$ of monomials of a given degree $d$. 
Then $R$ is the homogenous coordinate ring of the (projective) monomial curve given parametrically by $M$. This class of rings often serves as basic objects for several problems. For instance,  the first non-trivial example of a non-Cohen-Macaulay ring is $k[x^4,x^3y,xy^3,y^4]$, which was found by Macaulay \cite{Ma}. In turned out that this ring is also the first non-trivial Buchsbaum ring, a generalization of Cohen-Macaulay ring \cite{BSV,SV}. It is known that $R$ is a Cohen-Macaulay or Buchsbaum ring if and only if the first local cohomology module of $R$, which is the Hartshorne-Rao module of the corresponding curve, vanishes or a vector space over $k$.

Suppose that $M = \{x^d, x^{\a_1}y^{d-\a_1},...,x^{\a_n}y^{d-\a_n},y^d\}$. It is natural to ask whether there is a characterization of the Buchsbaumness of $R$ in terms of the sequence $d,\a_1,...,\a_n$. 
In general, this problem is difficult because of the many involved parameters.
Surprisingly, if $x^{d-1}y, xy^{d-1} \in M$, one can show that $R$ is a Buchsbaum ring if and only if $d,\a_1,...,\a_n$ satisfy a system of linear inequalities \cite{Tr1}. This case is of geometric interest because the monomial curve given parametrically by $M$ is smooth if and only if $x^{d-1}y, xy^{d-1} \in M$. On the other hand, there are very few known non-Cohen-Macaulay Buchsbaum rings $R$ whose corresponding monomial curves are not smooth \cite{Br,BSV,Tr1}.

Another problem is to estimate the Castelnuovo-Mumford regularity $\reg(R)$ in terms of the sequence $d,\a_1,...,\a_n$. This problem is of great interest because $\reg(R)$ controls the shifts of the graded minimal free resolution of $R$ \cite{EG}. By the Eisenbud-Goto conjecture \cite{EG}, which was proved for projective curves by Gruson, Lazarsfeld and Peskine \cite{GLP}, we know that $\reg(R) \le d-n$. It is easy to see that $d-n-1$ is the sum of the integer gaps of the sequence $\a_1,...,\a_n$. A much better bound was given by L'vovsky \cite{Lv} in terms of the sum of the largest and second largest integer gaps.
For smooth curves, Hellus, Hoa and St\"uckrad  \cite{HHS} showed that $\reg(R)$ is bounded even by a fraction of the largest integer gap (see Section 3 for details). One can also find explicit regularity formulas for larges classes of smooth monomial curves \cite{HHS,Ni1}. To prove similar results for non-smooth monomial curves seems to be a daunting task because it was even an open problem to give a combinatorial proof of the Eisenbud-Goto conjecture for monomial curves \cite{BGT}.  This problem was recently solved by Nitsche \cite{Ni2}. 
\par

We shall see that the root of the aforementioned results for smooth monomial curves lies in the fact that the Veronese subring of $k[x,y]$ generated by the monomials of degree $d$ is a finite Macaulayfication of $R$ if $x^{d-1}y, xy^{d-1} \in M$. We call a ring extension $R^*$ of $R$ in $k[x,y]$ a {\em finite Macaulayfication} if $R^*$ is a Cohen-Macaulay ring and the quotient $R^*/R$ has finite length. 
For an arbitrary set $M$ of monomials, $R$ always has a unique finite Macaulayfication $R^*$. In the first section of this paper we show that the Buchsbaumness and the Castelnuovo-Mumford regularity of $R$ can be characterized by means of $R^*$. If $R^*$ is a ring generated by a set $N$ monomials of degree $d$, these characterizations can be expressed in terms of the numerical semigroups generated by the first exponents of the monomials of $M$ and $N$ (Theorem \ref{criterion}). This gives an efficient method to study projective monomial curves. In the second and third sections of this paper we use 
this method to study the Buchsbaumness and to estimate the Castelnuovo-Mumford regularity of large classes of non-smooth monomial curves in terms of $M$.

Given any set $M$ of monomials of degree $d$ in $k[x,y]$  with $x^d,y^d \in M$, we can find a unique sequence of integers  $0=a_0 \le a_1 \le \cdots \le a_{2r+1}=d$ with $a_{2i-1} < a_{2i}-1$, $i = 1,...,r$, such that 
$$M = \big\{x^\a y^{d-\a}|\ \a \in \bigcup_{i=0}^r[a_{2i}, a_{2i+1}]\big\},$$
where $[a_{2i}, a_{2i+1}]$ denotes the interval of integers $\a$, $a_{2i} \le \a \le a_{2i+1}$ and $r$ is the number of the integer gaps between these intervals. 
Note that $x^{d-1}y, xy^{d-1} \in M$ means $a_1 > 0$ and $a_{2r} < d$.
We will investigate non-smooth monomial curves of the following types:\par
{\bf Type  A}: $a_1 = 0$, $a_{2r} <  d$, and $2a_2 -1 \le a_3$, $r \ge 1$. \par
{\bf Type B}: $a_1 = 0$,  $a_{2r} = d$, and $2a_2 -1 \le a_3$,  $a_{2r-2} +d-1 \le 2a_{2r-1}$, $r \ge 2$. \par
\noindent By the duality of the variables $x,y$, Type A also covers the case $a_1 > 0$, $a_{2r} =  d$, and $a_{2r-2} +d-1 \le 2a_{2r-1}$. These types represent large classes of non-smooth monomial curves; see the various examples in Sections 2 and 3. 

If $r=1$ in Type A or $r=2$ in Type B, then $R$ is a Cohen-Macaulay ring \cite{Tr1} and $\reg(R)$ was computed in \cite{La}. If $r \ge 2$ in Type A or $r \ge 3$ in Type B, then $R$ is not a Cohen-Macaulay ring.
However, the finite Macaulayfication of $R$ can be computed and it has a relatively simple structure.
Using this fact, we show that $R$ is a Buchsbaum ring if and only if $a_1,...,a_{2r+1}$ satisfy an explicit system of linear inequalities if and only if $\reg(R) = 2$ (Theorems \ref{BmA} and \ref{BmB}). The first condition provides a practical way to test the Buchsbaumness, while the second condition implies that the corresponding monomial curve is defined by equations of degree at most 3.

For all curves of Types A and B, we are able to show that there are bounds for $\reg(R)$ in terms of a fraction of the largest integer gap as in the aforementioned result of Hellus, Hoa, and St\"uckrad (Theorem \ref{regA} and Theorem \ref{regB}). These bounds are so close to $\reg(R)$ that we can derive regularity formulas for the cases $r = 2$ of Type A and $r = 3$ of Type B.  
\smallskip

\noindent {\bf Theorem \ref{regA1}.}
{\em Let $1 < a <  b <  c  < d$ be a sequence of integers with $b < c-1$ and  
$R = k\big[x^\a y^{d-\a}|\ \a \in \{0\} \cup [a,b] \cup [c,d]\big]$. Assume that $2a -1 \le b$. 
Let $\e = \min\{b,d-c\}$. Then $\reg(R) = \left \lfloor \dfrac{c-b-2}{\e}\right \rfloor +2.$}
\smallskip

\noindent {\bf Theorem \ref{regB1}}.
{\em Let $1 < a < b < c < e < d-1$ be a sequence of integers with  $b < c-1$ and
$R = k\big[x^\a y^{d-\a}|\ \a \in \{0,d\} \cup [a,b] \cup [c,e] \big]$. 
Assume that $2a -1 \le b$ and $c+d-1 \le 2e$.
Let $\e = \min\{b,d-c\}$. Then
\begin{enumerate}[\indent \rm (1)]
\item $\left \lfloor \dfrac{c-b-2}{\e}\right \rfloor + 2 \le \reg(R) \le \left \lfloor \dfrac{c-b-2}{\e}\right \rfloor + 3$.
\item $\reg(R) = \left \lfloor \dfrac{c-b-2}{\e}\right \rfloor + 2$ if one of the following conditions is satisfied:\par
{\rm (i)} $a-1 \le e-c$ and $d-e-1 \le b-a$, \par
{\rm (ii)}  $b-a \ge d-c$,\par
{\rm (iii)}  $e-c \ge b$.
\end{enumerate}}

The above results are remarkable because they give the first explicit regularity formulas for {\em large classes} of non-smooth monomial curves which are not arithmetically Cohen-Macaulay.
We could find in the literature only  explicit regularity formulas for particular cases of such curves which are curves in ${\Bbb P}^3_k$ \cite{BCFH} or associated to generalized arithmetic sequences \cite{BGG}. Explicit regularity formulas for non-smooth monomial curves which are arithmetically Cohen-Macaulay can be found in \cite{BGG,JS,La}.

In general, we can always characterize the Buchsbaumness and estimate $\reg(R)$ if we know the finite Macaulayfication of $R$. So our approach could be used to study other monomial curves as well. 

%%%%%%%%%%%%%%%%%%%%%%%%%%%%%%%%

\section{Finite Macaulayfication}

Let $M$ be an arbitrary set of monomials in two variables $x$ and $y$ such that $x^d,y^d \in M$ for some integer $d \ge 0$.  Let $\mm$ be maximal monomial ideal of $k[M]$. For any module $S$ we denote by 
$H_\mm^i(S)$  the $i$-th local cohomology module of $S$ with respect to $\mm$. It is well known that $k[M]$ is a Cohen-Macaulay ring if and only if $H_\mm^1(k[M]) = 0$. Note that $\dim k[M] = 2$ and $H_\mm^0(k[M]) = 0$ because $k[M]$ is a domain. \par 

Let $E$ denote the set of the points $(\a,\b)$ such that $x^\a y^\b \in k[M]$.
Then $E$ is an additive semigroup in $\NN^2$ and $k[M]$ is the semigroup ring of $E$. We call $E$ the {\em affine semigroup} of $M$ or $k[M]$.
The Cohen-Macaulayness of affine semigroup rings in $\NN^d$ for arbitrary $d$ was studied
thoroughly in \cite{GSW, GW, HT}.

Given two subsets $A, B$ of $\NN$ or $\NN^2$, we define
$$A \pm B := \{u \pm v|\ u \in A, v \in B\}.$$
For $n \ge 1$, $nA$ denotes the sum of $n$ copies of $A$ (not the set $\{nu|\ u \in A\}$).

Let $E_1$ and $E_2$ denote the sets of the elements $(\a,\b) \in E$ with $\b = 0$ or $\a = 0$, respectively. Note that $e_1 := (d,0) \in E_1$ and $e_2 := (0,d) \in E_2$ because $x^d,y^d \in M$.
Define $$E^* := (E-E_1) \cap (E-E_2).$$
Then $E^*$ is an additive semigroup in $\NN^2$. Let 
$$M^* := \{x^\a y^\b|\ (\a,\b) \in E^*\}.$$
It is easy to see that $M \subseteq M^*$ and $(M^*)^* = M^*$.

\begin{thm} \label{CM} 
Let $M$ and $M^*$ be as above. Then\par
{\rm (i)} $k[M^*]$ is a Cohen-Macaulay ring.\par
{\rm (ii)} $k[M^*]/k[M]$ has finite length.\par
{\rm (iii)} $k[M^*] = k[M]$ if $k[M]$ is a Cohen-Macaulay ring,\par 
{\rm (iv)} $k[M^*]$ is the unique Cohen-Macaulay ring containing $k[M]$ in $k[x,y]$ such that $k[M^*]/k[M]$ has finite length.\par
{\rm (v)} $H_\mm^1(k[M]) \cong k[M^*]/k[M]$ and $H_\mm^2(k[M]) \cong H_\mm^2(k[M^*])$.
\end{thm}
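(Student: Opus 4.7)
The plan is to prove (i) and (v) first, after which (ii) and (iii) follow quickly, and to handle uniqueness (iv) by a separate argument. For (i), I will show that $x^d, y^d$ is a regular sequence on $k[M^*]$; since $\dim k[M^*] = 2$ (forced by $k[M] \subseteq k[M^*] \subseteq k[x,y]$), this gives Cohen-Macaulayness. Non-zero-divisor status of $x^d$ is automatic from $k[M^*]$ being a domain. For the second step, the claim reduces to the semigroup statement: if $(\a,\b),(\gamma,\d) \in E^*$ satisfy $(\a,\b) + e_2 = (\gamma,\d) + e_1$, so that $\a = \gamma + d$ and $\d = \b + d$, then $(\a-d,\b) \in E^*$. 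From $(\a,\b) \in E - E_1$ with witness $(\a + n_1 d, \b) \in E$ one reads $(\a-d,\b) + (n_1+1)e_1 \in E$, hence $(\a-d,\b) \in E - E_1$; from $(\a-d,\b+d) \in E - E_2$ with witness $(\a-d, \b + (n_2+1)d) \in E$ one gets $(\a-d,\b) \in E - E_2$.

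For (v), I will use the \v{C}ech complex for local cohomology on the $\mm$-primary pair $x^d, y^d$ (each generator of $\mm$ has a suitable power landing in $(x^d, y^d)$). The monomial $k$-basis of $k[M]_{x^d}$ inside $k[M]_{x^d y^d}$ consists of those $x^\a y^\b$ with $(\a, \b) \in E - E_1$, and analogously for $k[M]_{y^d}$, so the intersection has monomial basis $E^*$ and equals $k[M^*]$. The middle \v{C}ech cohomology then gives $H_\mm^1(k[M]) \cong k[M^*]/k[M]$ directly. Applying the long exact sequence to the short exact sequence $0 \to k[M] \to k[M^*] \to k[M^*]/k[M] \to 0$, using $H_\mm^0(k[M^*]) = 0$ (domain of positive depth) and $H_\mm^1(k[M^*]) = 0$ (from (i)), together with the fact that $k[M^*]/k[M]$ is $\mm$-torsion (any monomial $x^\a y^\b$ with $(\a,\b) \in E^*$ is killed modulo $k[M]$ by $x^{nd}$ for suitable $n$), hence has vanishing higher local cohomology, yields $H_\mm^2(k[M]) \cong H_\mm^2(k[M^*])$.

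Part (ii) now follows from (v) together with the fact that $k[M]$ is generalized Cohen-Macaulay: as a two-dimensional graded domain, its localization at any non-maximal prime is a Noetherian domain of dimension at most one, hence Cohen-Macaulay, so $k[M]$ is Cohen-Macaulay on the punctured spectrum and $H_\mm^1(k[M])$ has finite length. Part (iii) is immediate from (v): if $k[M]$ is Cohen-Macaulay then $H_\mm^1(k[M]) = 0$, forcing $k[M^*] = k[M]$. For (iv), let $S$ be Cohen-Macaulay with $k[M] \subseteq S \subseteq k[x,y]$ and $S/k[M]$ of finite length. Then $\mm^N S \subseteq k[M]$ for some $N$, so $x^{Nd}s, y^{Nd}s \in k[M]$ for every $s \in S$. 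Writing $s = \sum c_{\a\b} x^\a y^\b$ as a $k[x,y]$-polynomial and using that $k[M]$ is a monomial ring, each monomial of $s$ satisfies $(\a + Nd, \b), (\a, \b + Nd) \in E$, hence $(\a,\b) \in E^*$, giving $S \subseteq k[M^*]$. Conversely, $S$ is finitely generated over $k[M]$, so $x^d, y^d$ remains a system of parameters on $S$, upgraded to a regular sequence by Cohen-Macaulayness; the \v{C}ech argument then gives $S = S_{x^d} \cap S_{y^d}$ inside $S_{x^d y^d}$, and each monomial $x^\a y^\b$ with $(\a,\b) \in E^*$ lies in both localizations through the usual $E$-witnesses, hence in $S$.

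The main obstacle is the non-monomial bookkeeping in (iv): $S$ need not itself be a monomial subring, so the inclusion $S \subseteq k[M^*]$ must be extracted by decomposing $s \in S$ as a polynomial and exploiting the monomial structure of $k[M]$ to pull conclusions about each term separately. One must also verify throughout (v) and (iv) that the relevant localizations $S_{x^d}, S_{y^d}, S_{x^d y^d}$ (and their $k[M]$-analogues) are formed inside a common ambient ring, namely the fraction field of $k[M]$, which contains $S$ because $S/k[M]$ is $\mm$-power-torsion. The conceptual heart is the \v{C}ech identification $k[M]_{x^d} \cap k[M]_{y^d} = k[M^*]$, from which the rest of the theorem flows.
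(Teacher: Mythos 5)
Your proposal is correct, but it takes a genuinely different and more self-contained route than the paper's. The paper obtains (i), (ii) and (iii) by citing results on affine semigroup rings from Hoa--Trung (\cite[Corollaries 3.4(i), 2.4 and 2.2]{HT}), and proves (v) from the long exact sequence of local cohomology exactly as in your final step; its proof of (iv) moreover only compares $k[M^*]$ with monomial overrings $k[N]$, via $k[N^*]=k[N]$ and $N^*\supseteq M^*$. You instead prove (i) by exhibiting $x^d,y^d$ as a regular sequence through a direct semigroup computation, obtain the isomorphism $H_\mm^1(k[M])\cong k[M^*]/k[M]$ from the \v{C}ech complex via the identification $k[M]_{x^d}\cap k[M]_{y^d}=k[M^*]$, deduce (ii) from the generalized Cohen--Macaulay property of a two-dimensional graded domain, and treat an arbitrary intermediate ring $S$ in (iv), which actually matches the literal wording of the statement more closely than the paper's argument does. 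What your approach buys is independence from the external semigroup-ring machinery and a uniqueness statement valid beyond monomial subrings; what it costs is a few routine verifications that you should make explicit: (a) the equality $E-E_1=E-\NN e_1$, which underlies both the \v{C}ech identification and your choice of witnesses of the form $(\a+n_1d,\b)$ in (i) --- a witness $(c,0)\in E_1$ can be replaced by $d\cdot(c,0)=(dc,0)$, whose first coordinate is divisible by $d$; (b) the reduction of the regular-sequence check to monomials, justified by the $\mathbb{Z}^2$-grading of $k[M^*]$ and the fact that $x^dk[M^*]$ is spanned by monomials; and (c) the Noetherianity of $k[M^*]$, without which ``Cohen--Macaulay'' in (i) is not meaningful --- since this follows from the finiteness of $E^*\setminus E$, i.e.\ from (ii), you should either establish the \v{C}ech identification and the finite length of $H_\mm^1(k[M])$ before concluding (i), or prove the finiteness of $E^*\setminus E$ separately first.
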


\begin{proof}
(i) Let $\mm^*$ be the maximal bigraded ideal of $k[M^*]$. By \cite[Corollary 3.4(i)]{HT}, we have $H_{\mm^*}^1(k[M^*]) = 0$. Hence, $k[M^*]$ is a Cohen-Macaulay ring. \par

(ii) Since $k[M^*]$ is a domain, $k[M^*]_\pp$ is Cohen-Macaulay for all primes $\pp$ of $k[M^*]$ with $\height \pp = 1$. By \cite[Corollary 2.4]{HT}, this implies 
$$\dim k[M^*]/k[M] < \dim k[M]-1 = 1.$$ 
Hence, $\dim k[M^*]/k[M] = 0.$ Therefore, $k[M^*]/k[M]$ has finite length. \par

(iii) If $k[M]$ is a Cohen-Macaulay ring, $k[M^*] = k[M]$ by \cite[Corollary 2.2]{HT}. \par

(iv) Let $N \supseteq M$ be a set of monomials in $k[x,y]$ such that $k[N]$ is a Cohen-Macaulay ring and $k[N]/k[M]$ is of finite length. 
As we have seen above,  $k[N^*] = k[N]$. By definition,  $N^* \supseteq M^*$. This implies $k[N] \supseteq k[M^*]$.
Since $k[N]/k[M]$ is of finite length, there exists a number $n$ such that $\mm^n k[N] \subseteq k[M]$.
Let $x^\a y^\b$ be an arbitrary monomial of $N$. Then $x_i^{nd}x^\a y^\b \in M$, $i = 1,2$.
From this it follows that $(a,b) \in (E - ne_1) \cap (E-ne_2) \subseteq E^*$. 
Therefore, $N \subseteq M^*$, which implies $k[N] \subseteq k[M^*]$. So we have $k[N] = k[M^*]$. \par

(v) Consider the derived exact sequence of local cohomology of the modules of the short exact sequence
$$0 \to k[M] \to k[M^*] \to  k[M^*] /k[M]  \to 0.$$
Since $k[M^*] $ is a Cohen-Macaulay module over $k[M^*] $, $H_\mm^i(k[M^*]) = 0$ for $i \ne 2$.
Since $k[M^*]/k[M] $ is a finite-dimensional vector space, $H_\mm^0(k[M^*]/k[M]) \cong k[M^*]/k[M]$ and $H_\mm^i(k[M^*]/k[M]) = 0$ for $i > 0$. Therefore, $H_\mm^1(k[M]) \cong k[M^*]/k[M]$ 
and $H_\mm^2(k[M]) \cong H_\mm^2(k[M^*])$.
\end{proof}
\smallskip

\begin{rem}~
\begin{enumerate}
\item By Theorem \ref{CM}(i) and (ii), $k[M^*]$ is a finite Macaulayfication of $k[M]$. By Theorem \ref{CM}(iv), it is unique.
\item By Theorem \ref{CM}(ii), $E^* \setminus E$ is a finite set because the vector space $k[M^*]/k[M]$ has a basis consisting of the monomials of $k[M^*] \setminus k[M]$, whose exponents correspond to the elements of $E^* \setminus E$.  
\item By Theorem \ref{CM}(iii),  $k[M]$ is a Cohen-Macaulay ring if and only if 
$E^* = E.$ This fact can be also deduced from another criterion of Goto, Suzuki and Watanabe 
\cite[Theorem 2.6]{GSW}, which replaces the condition $E^* = E$ by the weaker condition $(E - e_1) \cap (E - e_2) = E.$  Alternative Cohen-Macaulay criteria for $k[M]$ can be found in \cite{FH, HS}.
\end{enumerate}
\end{rem}

\begin{rem}
We call a ring extension $S$ of a local ring $(R,\mm)$ a finite Macaulayfication of $R$ if $S$ is a Cohen-Macaulay ring and the quotient $S/R$ has finite length.
Any ring $R$ with $\dim R > \depth R \ge 2$ does not have a finite Macaulayfication. If such a ring $R$ has a finite Macaulayfication $S$, then from the exact sequence $0 \to R \to S \to S/R \to 0$ we can derive that $H_\mm^1(R) = \ell(S/R) \neq 0$. This implies $\depth R = 1$. 
\end{rem}

We can compute the finite Macaulayfication $k[M^*]$ indirectly as follows.

\begin{cor} \label{sufficient}  
Assume that $M$ is contained in a set $N \subseteq M^*$ such that $k[N]$ is a Cohen-Macaulay ring.
Then $k[M^*] = k[N]$.
\end{cor}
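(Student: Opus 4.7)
The plan is to apply the uniqueness statement in Theorem \ref{CM}(iv) directly to the ring $k[N]$. To invoke that statement I need to check two hypotheses: that $k[N]$ is a Cohen-Macaulay extension of $k[M]$ sitting inside $k[x,y]$, and that $k[N]/k[M]$ has finite length. The first is given by the hypothesis of the corollary, since $M \subseteq N$ gives $k[M] \subseteq k[N] \subseteq k[x,y]$ and $k[N]$ is assumed Cohen-Macaulay.

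For the finite length condition I would use the inclusion chain
\[
k[M] \subseteq k[N] \subseteq k[M^*]
\]
coming from $M \subseteq N \subseteq M^*$. By Theorem \ref{CM}(ii), the top quotient $k[M^*]/k[M]$ has finite length, and since $k[N]/k[M]$ embeds into $k[M^*]/k[M]$ as a $k[M]$-submodule, it inherits finite length. Once both hypotheses are verified, Theorem \ref{CM}(iv) forces $k[N] = k[M^*]$, which is exactly the claim. There is no real obstacle here; the corollary is essentially a repackaging of the uniqueness part of Theorem \ref{CM}, where the only content is to observe that the sandwich $M \subseteq N \subseteq M^*$ transfers the finite length property from $k[M^*]/k[M]$ to $k[N]/k[M]$.
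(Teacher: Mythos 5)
Your argument is correct and is essentially identical to the paper's proof: both verify the finite-length hypothesis by embedding $k[N]/k[M]$ into $k[M^*]/k[M]$ (finite length by Theorem \ref{CM}(ii)) and then invoke the uniqueness statement of Theorem \ref{CM}(iv). Nothing is missing.
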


\begin{proof}
We have $k[N]/k[M] \subseteq k[M^*]/k[M]$. Since $k[M^*]/k[M]$ is of finite length, $k[N]/k[M]$ is of finite length, too. Therefore, $k[M^*] = k[N]$ by Theorem \ref{CM}(iv).
\end{proof}

\begin{ex} \label{Veronese} 
Let $M$ be a set of monomials of degree $d$ with $x^{d-1}y,xy^{d-1} \in M$. Then $(d-1,1), (1,d-1) \in E$.
Let $N$ be the set of all monomials of degree $d$ in $k[x,y]$. For all monomials $x^\a y^{d-\a} \in N$, we have
\begin{align*}
(\a,d-\a) + (d-\a-1)e_1 & = (d-\a)(d-1,1) \in E,\\
(\a,d-\a) + (\a-1)e_2 & = \a(1,d-1) \in E.
\end{align*}
Hence, $(\a,d-\a) \in (E-E_1) \cap (E-E_2) = E^*$.
This shows that $N \subseteq M^*$. Since $k[N]$ is a Veronese subring of $k[x,y]$, $k[N]$ is a Cohen-Macaulay ring. Therefore, $k[M^*] = k[N]$ by Corollary \ref{sufficient}.
\end{ex}

If we know the finite Macaulayfication of $k[M]$, we can use it to  test the Buchsbaumness of $k[M]$. It is known that $k[M]$ is a Buchsbaum ring if and only if $\mm H_\mm^1(k[M]) = 0$.
We refer the reader to \cite{SV} for the definition and properties of Buchsbaum rings, which are a natural generalization of Cohen-Macaulay rings. 

\begin{prop} \label{buchsbaum}
Let $E$ be the affine semigroup of $M$. Then $k[M]$ is a Buchsbaum ring if 
and only if $(E \setminus \{0\}) + E^* \subseteq E$. 
\end{prop}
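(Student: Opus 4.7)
The plan is to translate the Buchsbaum condition into a statement about the monomial action of $\mm$ on $k[M^*]/k[M]$, and then read off the condition on the affine semigroup $E$.

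First I would recall that, by definition, $k[M]$ is a Buchsbaum ring if and only if $\mm H_\mm^1(k[M]) = 0$ (this is the standard characterization in dimension $2$, since $H_\mm^0(k[M])=0$ as noted after the theorem, and $k[M]$ is generated in degree $d$ so any system of parameters arising from a regular sequence behaves well). By Theorem \ref{CM}(v), there is an isomorphism $H_\mm^1(k[M]) \cong k[M^*]/k[M]$ of $k[M]$-modules, induced by the connecting homomorphism of the short exact sequence $0 \to k[M] \to k[M^*] \to k[M^*]/k[M] \to 0$, which is $k[M]$-linear. Therefore the Buchsbaum condition is equivalent to
$$\mm \cdot (k[M^*]/k[M]) = 0, \qquad \text{i.e.,} \qquad \mm\, k[M^*] \subseteq k[M].$$

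Next I would unwind this monomially. The ideal $\mm$ is the maximal monomial ideal of $k[M]$, so as a $k$-vector space it has basis $\{x^\a y^\b \mid (\a,\b)\in E\setminus\{0\}\}$. Likewise $k[M^*]$ has monomial $k$-basis indexed by $E^*$, and $k[M]$ has monomial $k$-basis indexed by $E$. Since all rings in sight are $\NN^2$-graded and multiplication of monomials is just addition of exponents, the containment $\mm\, k[M^*] \subseteq k[M]$ holds if and only if for every $(\a,\b)\in E\setminus\{0\}$ and every $(\gamma,\delta)\in E^*$ the monomial $x^{\a+\gamma}y^{\b+\delta}$ lies in $k[M]$, that is, $(\a,\b)+(\gamma,\delta)\in E$. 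This is precisely the condition $(E\setminus\{0\})+E^* \subseteq E$.

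The main (and really the only) obstacle is verifying that the isomorphism in Theorem \ref{CM}(v) is $k[M]$-linear, so that annihilation of $H_\mm^1(k[M])$ by $\mm$ corresponds exactly to annihilation of $k[M^*]/k[M]$ by $\mm$. But this is automatic from the construction: since $k[M^*]/k[M]$ has finite length, $H_\mm^0(k[M^*]/k[M]) = k[M^*]/k[M]$ and $H_\mm^1(k[M^*]) = 0$, so the long exact sequence of local cohomology gives the stated isomorphism as a map of $k[M]$-modules. Combining the two equivalences above finishes the proof.
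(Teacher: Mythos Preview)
Your argument is correct and follows essentially the same route as the paper: use Theorem~\ref{CM}(v) to identify $H_\mm^1(k[M])$ with $k[M^*]/k[M]$, translate $\mm H_\mm^1(k[M])=0$ into $\mm\,k[M^*]\subseteq k[M]$, and read this off as the semigroup condition $(E\setminus\{0\})+E^*\subseteq E$. The only minor quibble is that ``$\mm H_\mm^1(k[M])=0$'' is not the \emph{definition} of Buchsbaum but a known characterization in this setting (as the paper itself phrases it); otherwise your proof is just a more detailed version of the paper's.
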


\begin{proof}
The statement is a consequence of \cite[Lemma 4.11]{HT}.
It can be proved directly as follows.
By Theorem \ref{CM}(v), $\mm H_\mm^1(k[M]) = 0$ if and only if $\mm k[M^*] \subseteq k[M]$.
The last condition just means $(E \setminus \{0\}) + E^* \subseteq E$.
\end{proof}
\smallskip

\begin{rem}~
\begin{enumerate}
\item Proposition \ref{buchsbaum} was a special case of a general criterion for the Buchsbaumness of rings generated by monomials stated in \cite[Theorem 3.1]{Go}. However, this result does not hold in general and it was corrected by \cite[Lemma 4.11]{HT}. 
\item The condition $(E \setminus \{0\}) + E^* \subseteq E$ of Proposition \ref{buchsbaum} can be weakened to 
$(E \setminus \{0\}) + (E - 2e_1)\cap(E-2e_2) \subseteq E$ \cite[Lemma 3]{Tr}. Alternative Buchsbaum criteria for $k[M]$ can be found in \cite{FH}.
\end{enumerate}
\end{rem}

From now on, let $M$ be a set of monomials of the same degree $d$ with $x^d, y^d \in M$. Let $R = k[M]$. 
Then $R$ is a standard graded algebra with $\deg x^\a y^\b = (\a+\b)/d$ for all $x^\a y^\b \in R$.
For any graded $R$-module $S$ we denote by $S_n$ the $n$-th component of $S$.
For $i \ge 0$, let $a_i(S) := \max\{n|\  H_\mm^i(S)_n \neq 0\}$, where $a_i(S) = -\infty$ if $H_\mm^i(S) = 0$. The {\em Castelnuovo-Mumford regularity} of $S$ is defined by
$\reg(S) := \max\{a_i(S)+i|\ i \ge 0\}.$ 
It is a measure for the complexity of the graded structure of $S$ (see e.g. \cite{EG,Tr3}). Since $H_\mm^i(R) = 0$ for $i = 0$ and $i >  2$, we have 
$\reg(R) = \max\{a_1(R)+1,a_2(R)+2\}.$
\par

Let $Q := (x_1^d,x_2^d)$. Then $Q$ is a parameter ideal of $R$ generated by linear forms. 
From this it follows that there exists a number $n$ such that $R_{n+1} =  Q_{n+1}$.  
We call the least number $n$ with this property the {\em reduction number} of $R$ with respect to $Q$, denoted by $r_Q(R)$. This number can be computed easily.
By \cite[Proposition 3.2]{Tr2}, we have $a_2(R)+2 \le r_Q(R) \le \reg(R)$. Therefore,
$$\reg(R) = \max\{a_1(R)+1, r_Q(R)\}.$$
Note that the reduction number of $R$ with respect to $Q$ is defined one less in \cite{Tr2}. \par

Let $R^* = k[M^*]$. 
Since $R^*/R$ is a finite-dimensional vector space, $R^*$ is a finite graded $R$-module. Therefore, there also exists  a number $n$ such that $R^*_{n+1} =  (QR^*)_{n+1}$. We call the least number $n$ with this property the reduction number of $R^*$ with respect to $Q$, denoted by $r_Q(R^*)$. Since $R^*$ is a Cohen-Macaulay ring, $H_\mm^i(R^*) = 0$ for $i \neq 2$. Therefore, $a_i(R^*) = -\infty$ for $i \neq 2$. 
By a module-theoretic version of \cite[Proposition 3.2]{Tr2},  this implies
$$\reg(R^*) = r_Q(R^*) = a_2(R^*) + 2.$$

We can characterize $\reg(R)$ by means of $R^*$ as follows.
Let $a(R^*/R)$ denote the largest number $n$ such that $(R^*/R)_n \neq 0$. If $(R^*/R) = 0$, we set
$a(R^*/R) = -\infty$.

\begin{prop} \label{reg}
Let $R$, $R^*$ and $Q$ be as above. Then
$$\reg(R) = \max\{a(R^*/R)+1,r_Q(R^*)\}.$$
\end{prop}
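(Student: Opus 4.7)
The plan is to reduce the statement to a direct computation with the $a_i$-invariants of $R$, using the local cohomology isomorphisms provided by Theorem \ref{CM}(v) and the known regularity formula for the Cohen-Macaulay ring $R^*$.

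First I would invoke the definition $\reg(R) = \max\{a_i(R)+i \mid i \ge 0\}$. Because $R$ is a two-dimensional domain, $H_\mm^0(R) = 0$ and $H_\mm^i(R) = 0$ for $i > 2$, so only $i = 1$ and $i = 2$ can contribute, giving
\[
\reg(R) = \max\{a_1(R)+1,\ a_2(R)+2\}.
\]
This reduces the problem to identifying $a_1(R)$ and $a_2(R)$ in terms of data associated with $R^*$.

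Next I would apply Theorem \ref{CM}(v). The isomorphisms $H_\mm^1(R) \cong R^*/R$ and $H_\mm^2(R) \cong H_\mm^2(R^*)$ arise from the long exact sequence of local cohomology attached to the short exact sequence $0 \to R \to R^* \to R^*/R \to 0$, whose maps are homogeneous, so both isomorphisms are graded. Reading off the top nonzero degree of each side, I get $a_1(R) = a(R^*/R)$ and $a_2(R) = a_2(R^*)$. Since $R^*$ is Cohen-Macaulay of dimension $2$, the discussion preceding the proposition gives $\reg(R^*) = r_Q(R^*) = a_2(R^*)+2$, and hence $a_2(R)+2 = r_Q(R^*)$.

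Substituting these identifications into the displayed expression for $\reg(R)$ yields
\[
\reg(R) = \max\{a(R^*/R)+1,\ r_Q(R^*)\},
\]
as desired. The only point that needs a brief justification is the gradedness of the isomorphisms in Theorem \ref{CM}(v); this is the mild obstacle, but it follows at once from the fact that the connecting maps in the local cohomology sequence preserve the internal grading inherited from the standard grading on $R$ and $R^*$.
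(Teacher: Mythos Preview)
Your proof is correct and follows essentially the same route as the paper: invoke Theorem \ref{CM}(v) to get $a_1(R)=a(R^*/R)$ and $a_2(R)=a_2(R^*)$, then use $r_Q(R^*)=a_2(R^*)+2$ to rewrite $\reg(R)=\max\{a_1(R)+1,a_2(R)+2\}$ in the desired form. The only addition you make is the explicit remark on gradedness of the local cohomology isomorphisms, which the paper takes as understood.
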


\begin{proof}
By Theorem \ref{CM}(v),  $a_1(R) =  a(R^*/R)$ and $a_2(R) = a_2(R^*)$.
Therefore, 
\begin{align*}
\reg(R) & = \max\{a_1(R)+1,a_2(R)+2\}\\
 & = \max\{a(R^*/R)+1,a_2(R^*)+2\} = \max\{a(R^*/R)+1,r_Q(R^*)\}.
 \end{align*}
\end{proof}

We always have $r_Q(R^*) \le r_Q(R)$. In fact, $r_Q(R^*) = a_2(R)+2$ by the proof of Theorem \ref{reg} and $a_2(R)+2 \le r_Q(R)$ by \cite[Proposition 3.2]{Tr2}.  The following example shows that $r_Q(R)$ can be arbitrarily larger than $r_Q(R^*)$.

\begin{ex} 
If $x^{d-1}y,xy^{d-1} \in M$, then $R^*$ is the Veronese subring of $k[x,y]$ generated by the monomials of degree $d$. It is easy to see that $r_Q(R^*) = 1$. By \cite[Proposition 4.1]{La}, $r_Q(R)$ can be any number between 1 and $d-2$. 
\end{ex}

\begin{cor}
Let $R$, $R^*$ and $Q$ be as above. Assume that $R$ is a Buchsbaum ring. Then
$$\reg(R) = r_Q(R) \in \{r_Q(R^*), r_Q(R^*)+1\}.$$
\end{cor}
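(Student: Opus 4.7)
My plan is to use a short cohomological argument to place both $r_Q(R)$ and $\reg(R)$ in $\{r_Q(R^*),r_Q(R^*)+1\}$, then to run a Koszul computation on $(x^d,y^d;R)$ to force the equality $\reg(R)=r_Q(R)$.

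The Buchsbaum hypothesis, via Theorem~\ref{CM}(v), is equivalent to $\mathfrak{m}R^*\subseteq R$; in particular $x^dR^*,\,y^dR^*\subseteq R$. For any $m>r_Q(R^*)$ one then has $R^*_m=(QR^*)_m=x^dR^*_{m-1}+y^dR^*_{m-1}\subseteq R_m$, so $a(R^*/R)\le r_Q(R^*)$. Proposition~\ref{reg} then gives $\reg(R)=\max\{a(R^*/R)+1,\,r_Q(R^*)\}\le r_Q(R^*)+1$, and combined with the inequalities $r_Q(R^*)\le r_Q(R)\le \reg(R)$ recorded immediately before the Corollary this forces both $r_Q(R)$ and $\reg(R)$ into $\{r_Q(R^*),\,r_Q(R^*)+1\}$.

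The only way $\reg(R)=r_Q(R)$ could fail is in the boundary case $a(R^*/R)=r_Q(R^*)=:n$, where $\reg(R)=n+1$ but a priori $r_Q(R)$ might still be $n$. To rule this out, I would compute $h_{R/Q}(z)$ via the Koszul complex $K_\bullet(x^d,y^d;R)\colon 0\to R(-2)\to R(-1)^2\to R\to 0$. Since $R$ is a domain, $H_2=0$, and the syzygy equation $x^da+y^db=0$ forces $a=y^dc,\,b=-x^dc$ for some $c\in k[x,y]$, identifying $H_1\cong(R''/R)(-2)$ with $R'':=\{f\in k[x,y]|\ x^df,\ y^df\in R\}$. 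Crucially, Buchsbaumness implies $R''=R^*$: the inclusion $R''\subseteq R^*$ always holds (a monomial $x^ay^b\in R''$ satisfies $(a+d,b),\,(a,b+d)\in E$, hence $(a,b)\in E^*$), while $R^*\subseteq R''$ is exactly $\mathfrak{m}R^*\subseteq R$. The Koszul Euler identity then reads $h_{R/Q}(z)=(1-z)^2h_R(z)+z^2h_{R^*/R}(z)$; combined with $h_R=h_{R^*}-h_{R^*/R}$ and the Cohen--Macaulay identity $(1-z)^2h_{R^*}(z)=h_{R^*/QR^*}(z)$, this simplifies to
\[
h_{R/Q}(n)=h_{R^*/QR^*}(n)+2h_{R^*/R}(n-1)-h_{R^*/R}(n).
\]
At $n=r_Q(R^*)+1$ this reduces to $2h_{R^*/R}(r_Q(R^*))$, which is nonzero exactly when $a(R^*/R)=r_Q(R^*)$, and $h_{R/Q}$ vanishes in strictly larger degrees. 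Hence $r_Q(R)=r_Q(R^*)+1$ in the boundary case and $r_Q(R)=r_Q(R^*)$ otherwise, matching $\reg(R)$ in each case.

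The main obstacle I expect is the Koszul bookkeeping: tracking the degree shift in $H_1\cong(R''/R)(-2)$ and recognising $R''=R^*$ as a consequence of Buchsbaumness. Once these are in hand, the arithmetic with Hilbert series that extracts $r_Q(R)$ is purely mechanical.
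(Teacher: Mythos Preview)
Your argument is correct. The first paragraph---using $\mathfrak m R^*\subseteq R$ to force $a(R^*/R)\le r_Q(R^*)$ and then invoking Proposition~\ref{reg} and the chain $r_Q(R^*)\le r_Q(R)\le\reg(R)$---is exactly the paper's argument for the containment $\reg(R)\in\{r_Q(R^*),r_Q(R^*)+1\}$.

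Where you diverge is in establishing $\reg(R)=r_Q(R)$. The paper simply quotes \cite[Corollary~3.5]{Tr2}, a general fact that for Buchsbaum graded rings the regularity equals the reduction number with respect to a linear system of parameters. You instead compute $r_Q(R)$ directly: the Koszul complex $K_\bullet(x^d,y^d;R)$ has $H_2=0$ and $H_1\cong (R''/R)(-2)$ with $R''=\{f:x^df,y^df\in R\}$; Buchsbaumness gives $R''=R^*$ (the inclusion $R''\subseteq R^*$ is automatic, and $R^*\subseteq R''$ is precisely $QR^*\subseteq R$), and the Euler identity then yields $h_{R/Q}(n)=h_{R^*/QR^*}(n)+2h_{R^*/R}(n-1)-h_{R^*/R}(n)$, from which one reads off $r_Q(R)$ case by case. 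This is more work but is self-contained and in fact sharper: it pins down \emph{which} of the two values $r_Q(R^*)$ or $r_Q(R^*)+1$ occurs, namely the latter exactly when $a(R^*/R)=r_Q(R^*)$. The paper's route is shorter but imports a black-box result about Buchsbaum rings; yours recovers that result in this special two-dimensional setting by an explicit Hilbert-series calculation.
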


\begin{proof}
Since $R$ is a Buchsbaum ring, $\reg(R) = r_Q(R)$ by \cite[Corollary 3.5]{Tr2}. 
By Proposition \ref{reg}, to prove that $\reg(R) \in \{r_Q(R^*), r_Q(R^*)+1\}$ we only need to show that $a(R^*/R)  \le r_Q(R^*)$.  
For $n \ge r_Q(R^*)$, we have $R^*_{n+1} = (QR^*)_{n+1}$. 
By Proposition \ref{buchsbaum}, $\{e_1,e_2\} + E^* \subseteq E$, 
which implies $(QR^*)_{n+1} = R_{n+1}$. 
Therefore, $(R^*/R)_{n+1} = 0$. Hence, $a(R^*/R)  \le r_Q(R^*)$.
\end{proof}

It is easy to find a Buchsbaum ring $R$ with $\reg(R) = r_Q(R^*) + 1$.  

\begin{ex}
Let $R = k[x^4,x^3y,xy^3,y^4]$. Then  $R^* = k[x^4,x^3y,x^2y^2, xy^3,y^4]$ with $r_Q(R^*) = 1$. 
Since $R$ is a Buchsbaum ring with $r_Q(R)= 2$, we have $\reg(R) = 2$.
\end{ex}

To check the Buchsbaumness or to estimate the regularity of $R$ we only need to work with sequences of integers if the finite Macaulayfication $R^*$ is generated by monomials of degree $d$. For that purpose we set 
$$G_M := \{\a|\ x^\a y^{d-\a}\in M\}$$
for every set $M$ of monomials of degree $d$ in $k[x,y]$. 

\begin{thm} \label{criterion}
Let $R = k[M]$. Assume that $R^* = k[N]$ where $N$ is a set of monomials of degree $d$. Let $Q = (x^d,y^d)$. Then 
\begin{enumerate}[\indent \rm (1)]
\item $R$ is a Buchsbaum ring if and only if $G_M + G_N = 2G_M$. 
\item $\reg(R) = \min\{n \ge r_Q(R^*)|\  nG_M = nG_N\}$.
\end{enumerate}
\end{thm}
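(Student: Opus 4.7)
For part (1), my plan is to combinatorialize the Buchsbaum criterion of Proposition~\ref{buchsbaum}. Because $M$ and $N$ consist of monomials of the common degree $d$, both $R$ and $R^*$ are standard graded and generated in degree one, and the degree-$n$ components have $k$-bases $\{x^\a y^{nd-\a}\mid \a\in nG_M\}$ and $\{x^\a y^{nd-\a}\mid \a\in nG_N\}$, respectively. Under this dictionary, the semigroup condition $(E\setminus\{0\})+E^*\subseteq E$ is equivalent to the family of inclusions $mG_M+nG_N\subseteq(m+n)G_M$ for all $m\ge 1$ and $n\ge 0$. The case $m=n=1$ gives $G_M+G_N\subseteq 2G_M$, and the reverse inclusion is automatic since $G_M\subseteq G_N$. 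Conversely, starting from $G_M+G_N\subseteq 2G_M$ (equivalently $R_1R^*_1\subseteq R_2$), the full family follows by iteration: one repeatedly rewrites $R_1^m(R^*_1)^n=R_1^{m-1}(R_1R^*_1)(R^*_1)^{n-1}\subseteq R_1^{m+1}(R^*_1)^{n-1}$ until the $R^*_1$ factors are exhausted.

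For part (2), I would start from Proposition~\ref{reg}, which gives $\reg(R)=\max\{a(R^*/R)+1,\, r_Q(R^*)\}$, and translate $a(R^*/R)$ combinatorially. Since $R_n\subseteq R^*_n$ with bases indexed by $nG_M\subseteq nG_N$, the graded component $(R^*/R)_n$ vanishes iff $nG_M=nG_N$, so $a(R^*/R)=\max\{n\mid nG_M\ne nG_N\}$ (with the convention $\max\emptyset=-\infty$). The technical heart of the argument is the following propagation step: for every $n\ge r_Q(R^*)$, if $nG_M=nG_N$ then $(n+1)G_M=(n+1)G_N$. Its proof uses the very definition of the reduction number, namely $R^*_{n+1}=x^dR^*_n+y^dR^*_n$ for $n\ge r_Q(R^*)$; substituting the hypothesis $R^*_n=R_n$ gives $R^*_{n+1}\subseteq R_{n+1}$, forcing equality.

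Write $\mu:=\min\{n\ge r_Q(R^*)\mid nG_M=nG_N\}$. Iterating the propagation step shows $mG_M=mG_N$ for every $m\ge\mu$, so $a(R^*/R)\le\mu-1$; combined with $\mu\ge r_Q(R^*)$ this yields $\mu\ge\max\{a(R^*/R)+1,\, r_Q(R^*)\}=\reg(R)$. For the reverse inequality, the case $\mu=r_Q(R^*)$ is immediate, while if $\mu>r_Q(R^*)$ then $\mu-1\ge r_Q(R^*)$ and $(\mu-1)G_M\ne(\mu-1)G_N$ by minimality of $\mu$, hence $a(R^*/R)\ge\mu-1$, giving $\mu\le a(R^*/R)+1\le\reg(R)$.

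The main obstacle is isolating and justifying the propagation step: without the restriction $n\ge r_Q(R^*)$ the set of $n$ with $nG_M=nG_N$ need not be upward closed, so its minimum could lie strictly below $\reg(R)$. It is precisely the reduction-number hypothesis that produces closure under $n\mapsto n+1$ and makes the clean minimum formula of the theorem go through.
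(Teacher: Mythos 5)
Your proposal is correct and follows essentially the same route as the paper: part (1) reduces Proposition~\ref{buchsbaum} to the degree-one generators (the paper states the reduction $A+B\subseteq 2A$ tersely; your iteration $R_1^m(R_1^*)^n\subseteq R_1^{m+n}$ is exactly the justification intended), and part (2) uses Proposition~\ref{reg} together with the same propagation step $(n+1)G_N=\{0,d\}+nG_N\subseteq G_M+nG_N$ for $n\ge r_Q(R^*)$. Your handling of the minimum via $\mu$ is in fact slightly more careful than the paper's one-line assertion that $a(R^*/R)+1=\min\{n\mid nG_M=nG_N\}$.
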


\begin{proof}
Let $E$ be the affine semigroup of $M$. By Proposition \ref{buchsbaum}, $R$ is a Buchsbaum ring if and only if $(E \setminus \{0\}) + E^* \subseteq E$. Let $A = \{(\a,d-\a)|\ \a \in G_M\}$ and $B = \{(\a,d-\a)|\ \a \in G_N\}$. The assumption implies that $E$ and $E^*$ are generated by $A$ and $B$, respectively. 
Therefore, $(E \setminus \{0\}) + E^* \subseteq E$ if and only if $A + B \subseteq 2A$. This condition is satisfied if and only if $G_M + G_N \subseteq 2G_M$ or, equivalently, $G_M + G_N = 2G_M$.

By Proposition \ref{reg}, $\reg(R) = \max\{a(R^*/R)+1,r_Q(R^*)\}.$ We have $a(R^*/R) = \max\{n|\ R_n \neq R^*_n\}.$
Since $R$ and $R^*$ are generated by monomials of degree $d$, $R_n = k[x^\a y^{nd-\a}|\ \a \in nG_M]$ and $R^*_n  = k[x^\a y^{nd-\a}|\ \a \in nG_N].$ Therefore,
$$a(R^*/R) = \max\{n|\ nG_M \neq nG_N\}.$$
For $n \ge r_Q(R^*)$, $(n+1)G_N = \{0,d\} + nG_N \subseteq G_M+nG_N$. This implies $(n+1)G_M = (n+1)G_N$ if $nG_M = nG_N$. Thus, $a(R^*/R) + 1 = \min\{n|\ nG_M = nG_N\}$. 
From this it follows that $\reg(R) = \min\{n\ge r_Q(R^*)|\ nG_M =nG_N\}.$
\end{proof}

The reduction number $r_Q(R^*)$ can be also computed by means of $G_N$. 
Since $r_Q(R^*) = \min\{n|\ R^*_{n+1} = (QR^*)_{n+1}\}$, we have
$$r_Q(R^*) = \min\{n|\ (n+1)G_N = \{0,d\}+nG_N\}.$$

%%%%%%%%%%%%%%%%%%

\section{Buchsbaumness}

Let $M$ be a set of monomials of degree $d$ in two variables $x$ and $y$ such that $x^d,y^d \in M$ for some integer $d \ge 0$. Then we can find a unique sequence of integers 
$0=a_0 \le  a_1 \le \cdots \le a_{2r+1} = d$
with $a_{2i-1} < a_{2i}-1$, $i = 1,...,r$, such that the set of the exponents $\a$, $x^\a y^{d-\a} \in M$, is given by
$$G_M = \bigcup_{i=0}^r[a_{2i}, a_{2i+1}],$$
where $[a_{2i}, a_{2i+1}]$ denotes the interval of integers $\a$, $a_{2i} \le \a \le a_{2i+1}$. 
The condition $a_{2i-1} < a_{2i}-1$ means that $[a_{2i-1}+1, a_{2i}-1]$ is an integer gap between the integer intervals of $G_M$.

Let $R = k[M]$. The aim of this section is to give criteria for $R$ to be a Buchsbaum ring in terms of the sequence $a_1,...,a_{2r+1}$. Let $R^*$ denote the finite Macaulayfication of $R$. We will concentrate on the case 
$R^* = k[N]$ where $N$ is a set of monomials of degree $d$ in $k[x,y]$.
In this case, $R$ is a Buchsbaum ring if and only if  $G_M + G_N \subseteq 2G_M$ by  Theorem \ref{criterion}(1).  
We can represent $G_M + G_N$ as a union of disjoint integer intervals in $[0,2d]$. 
Given such an interval $[u,v]$, we have to find conditions for $[u,v] \subseteq 2G_M$ in terms of the sequence $a_0,a_1,...,a_{2r+1}$. The following result shows that
$[u,v] \subseteq 2G_M$ if and only if $a_1,...,a_{2r+1}$ satisfies a system of linear inequalities.

Let $I = \{(m,n) \in \NN^2|\ 0 \le m,n \le r\}$.
For $(m,n), (m',n') \in I$, we define $(m',n') \le  (m,n)$ if $m' \le  m, n' \le n$. This gives a partial order on $I$. 
A subset $J$ of $I$ is {\em symmetric} if whenever $(m,n) \in J$, then $(n,m) \in J$. 
We call $J$ a {\em poset ideal} if whenever $(m',n') \le (m,n) \in J$, then $(m',n') \in J$. 
Let $J_{\max}$ resp. $J_{\min}$ denote the set of maximal resp. minimal elements of $J$.

\begin{lem} \label{cover}  
Let $0 \le u \le v \le 2d$ be two arbitrary integers. Let $(m,n)$ 
and $(m',n')$ be elements of $I$ such that $a_{2m+1}+a_{2n+1}$ is the maximum of all values 
$a_{2i+1}+a_{2j+1} < u$ and $a_{2m'}+a_{2n'}$ is the minimum of all values $a_{2i}+a_{2j} > v$, $(i,j) \in I$.
Then $[u,v] \subseteq 2G_M$ if and only if for every symmetric poset ideal $J$ of $I$ with $(m,n) \in J$ and $(m',n') \not\in J$, 
\begin{equation}
\max\big\{a_{2i+1}+a_{2j+1}|\ (i,j) \in J_{\max}\big\} +1 \ge 
\min\big\{a_{2i}+a_{2j}|\ (i,j) \in (I\setminus J)_{\min}\big\}.
\end{equation} 
\end{lem}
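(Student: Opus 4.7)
The plan is to analyze $2G_M$ combinatorially via symmetric poset ideals of $I$, by translating the question ``$t \in 2G_M$?'' into a condition on such ideals. Since $G_M = \bigcup_{i=0}^r[a_{2i},a_{2i+1}]$, we have
\[
2G_M \;=\; \bigcup_{(i,j) \in I}\,[a_{2i}+a_{2j},\; a_{2i+1}+a_{2j+1}].
\]
Monotonicity of the sequence $0 \le a_0 \le a_1 \le \cdots \le a_{2r+1}$ implies that for any poset ideal $J \subseteq I$, the maximum of $a_{2i+1}+a_{2j+1}$ over $J_{\max}$ equals its maximum over all of $J$, and dually the minimum of $a_{2i}+a_{2j}$ over $(I\setminus J)_{\min}$ equals its minimum over $I \setminus J$; denote these $A(J)$ and $B(J)$. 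The pivotal equivalence is: an integer $t$ fails to lie in $2G_M$ if and only if $J_t := \{(i,j) \in I : a_{2i+1}+a_{2j+1} < t\}$ is a proper symmetric poset ideal satisfying $A(J_t) < t < B(J_t)$.

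For the sufficient direction I argue by contradiction. Suppose (2.1) holds for every symmetric poset ideal $J$ with $(m,n) \in J$ and $(m',n') \notin J$, but some $t \in [u,v]$ is not in $2G_M$. Setting $J := J_t$, I verify that $(m,n) \in J_t$ since $a_{2m+1}+a_{2n+1} < u \le t$, and that $(m',n') \notin J_t$ since $a_{2m'+1}+a_{2n'+1} \ge a_{2m'}+a_{2n'} > v \ge t$ (using monotonicity of $(a_k)$). Hence $J_t$ is eligible, but $A(J_t)+1 \le t \le B(J_t)-1$ gives $A(J_t) + 1 < B(J_t)$, contradicting (2.1) for $J_t$. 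This forces $[u,v] \subseteq 2G_M$.

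For the necessary direction, suppose some eligible $J$ violates (2.1), so $A(J)+1 < B(J)$; by the pivotal equivalence, every integer in the gap $[A(J)+1,\; B(J)-1]$ lies outside $2G_M$. The proof is completed by locating such an integer inside $[u,v]$: the maximality of $a_{2m+1}+a_{2n+1}$ among values $< u$, combined with $(m,n) \in J$ and the fact that the values $a_{2i+1}+a_{2j+1}$ avoid the interval $(a_{2m+1}+a_{2n+1},\, u)$, controls $A(J)$ so that $A(J)+1 \le v$; dually, the minimality of $a_{2m'}+a_{2n'}$ among values $> v$, combined with $(m',n') \notin J$, forces $B(J)-1 \ge u$. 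Together these bounds guarantee that the gap meets $[u,v]$.

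The main obstacle is precisely this gap-location step in the necessity direction. Ruling out symmetric poset ideals $J$ which satisfy the membership conditions on $(m,n)$ and $(m',n')$ yet whose gap $[A(J)+1, B(J)-1]$ slips entirely below $u$ or above $v$ requires exploiting the extremal definitions of $(m,n)$ and $(m',n')$ in tandem with the monotonicity of the sums $a_{2i}+a_{2j}$ and $a_{2i+1}+a_{2j+1}$ on $I$. The technical content lies in arguing that for any element of $J$ of value $\ge u$, there must be a corresponding element of $I \setminus J$ of value $\le v$ bounding $B(J)$, and symmetrically; this is where the specific form of the partial order on $I$ and the symmetry of $J$ are decisive.
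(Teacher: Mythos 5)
Your first direction (the inequalities force $[u,v]\subseteq 2G_M$) is complete and coincides with the paper's argument: for $t\in[u,v]\setminus 2G_M$ the ideal $J_t=\{(i,j):a_{2i+1}+a_{2j+1}<t\}$ is symmetric, contains $(m,n)$, omits $(m',n')$, and violates (2.1). The genuine gap is in the other direction, and it is exactly the step you flag as ``the main obstacle'' and then do not carry out. You claim the extremal definitions of $(m,n)$ and $(m',n')$ yield $A(J)+1\le v$ and $B(J)-1\ge u$, so that the gap $[A(J)+1,B(J)-1]$ merely \emph{meets} $[u,v]$; no derivation is given, and the route you sketch does not work. In fact, with the lemma read literally these bounds can fail: take $r=2$, $G_M=\{0\}\cup[5,6]\cup[20,25]$ (so $d=25$) and $[u,v]=[21,24]$. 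Then $2G_M=\{0\}\cup[5,6]\cup[10,12]\cup[20,31]\cup[40,50]$, so $(m,n)=(1,1)$ and $(m',n')=(1,2)$, and $J=\{(0,0),(0,1),(1,0),(1,1)\}$ is an eligible symmetric poset ideal with $A(J)=12$ and $B(J)=20$: here $B(J)-1=19<u$, the gap $[13,19]$ misses $[21,24]$ entirely, and (2.1) fails even though $[21,24]\subseteq 2G_M$. So the step you left open cannot be closed under the hypotheses as you (and the lemma, taken verbatim) state them.

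The paper closes this step differently and much more simply: since $(m,n)\in J$ and $(m',n')\notin J$, monotonicity of the $a_k$ gives $A(J)\ge a_{2m+1}+a_{2n+1}$ and $B(J)\le a_{2m'}+a_{2n'}$, hence $[A(J)+1,B(J)-1]\subseteq[a_{2m+1}+a_{2n+1}+1,\,a_{2m'}+a_{2n'}-1]$ --- the gap is \emph{contained} in the interval bounded by the two extremal sums, not merely shown to intersect $[u,v]$. This finishes the proof precisely when that interval sits inside $[u,v]$, i.e.\ when $u-1=a_{2m+1}+a_{2n+1}$ and $v+1=a_{2m'}+a_{2n'}$; that is the case in every application in the paper ($u=2a_3+1$, $v=2a_{2r}-1$, etc.), and the paper uses it implicitly in the displayed chain of containments. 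So the missing ideas are: (a) replace your intersection bounds by the trivial containment bounds above, and (b) recognize that the argument (and the statement itself, per the example) needs $u$ and $v$ to be adjacent to the extremal sums.
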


\begin{proof} 
Let $J$ be an arbitrary symmetric poset ideal of $I$ with $(m,n) \in J$ and $(m',n') \not\in J$. Set
\begin{align*}
a & = \max\{a_{2i+1}+a_{2j+1}|\ (i,j) \in J_{\max}\},\\
b & =  \min\{a_{2i}+a_{2j}|\ (i,j) \in (I\setminus J)_{\min}\}.
\end{align*} 
If $a +1 < b$, 
$$[a+1,b-1] \subseteq [a_{2m+1}+a_{2n+1}+1, a_{2m'}+a_{2n'}-1] \subseteq [u,v].$$
Let $c$ be an element of $[a+1,b-1]$. Since $a < c$, $c \not\in [a_{2i}+a_{2j}, a_{2i+1}+a_{2j+1}]$ for any $(i,j) \in J$. Since  $c < b$, $c \not\in [a_{2i}+a_{2j}, a_{2i+1}+a_{2j+1}]$ for any $(i,j) \in I \setminus J$. 
Note that $2G_M = \bigcup_{(i,j) \in I}[a_{2i}+a_{2j},a_{2i+1} + a_{2j+1}]$. Then $c \not\in 2G_M$. Therefore, $[u,v]\not\subseteq 2G_M$. \par
Conversely, if $[u, v] \not\subseteq 2G_M$, 
choose an element $c \in [u,v]$ such that $c \not\in 2G_M$.  Let $J$ be the set of all $(i,j) \in I$ such that $c > a_{2i+1} + a_{2j+1}$. Then $J$ is a symmetric poset ideal of $I$ with $(m,n) \in J$ and $(m',n') \not\in J$. If we define $a$ as above, then $a < c$. For $(i,j) \in I\setminus J$, we have $c \le a_{2i+1}+a_{2j+1}$. Since $c \not\in [a_{2i}+a_{2j},a_{2i+1}+a_{2j+1}] \subset 2G_M$, this implies $c < a_{2i}+a_{2j}$. If we define $b$ as above, then $c < b$. Hence, $a + 1 < b$.
\end{proof}

\begin{ex} \label{smooth}
Let us consider the smooth case $a_1 > 0$ and $a_{2r} <d$. In this case, $R^* = k[N]$, where $N$ is the set of all monomials of degree $d$ in $k[x,y]$. 
Since $G_N = [0,d]$, $G_M + G_N \supseteq \{0,d\} + [0,d] = [0,2d]$. Hence $G_M + G_N = [0,2d]$. 
By Theorem \ref{criterion}(1), $R$ is a Buchsbaum ring if and only if $[0,2d] = 2G_M$.
Since $[0,2a_1], [2a_{2r},2d] \subseteq 2G_M$, $[0,2d] = 2G_M$ if and only if $[2a_1+1,2a_{2r}-1] \subseteq 2G_M$. Note that $a_1+a_1$ is the maximum of all values 
$a_{2i+1}+a_{2j+1} < 2a_1+1$ and $a_{2r}+a_{2r}$ is the minimum of all values $a_{2i}+a_{2j} > 2a_{2r}-1$, $(i,j) \in I$. Then $[2a_1+1,2a_{2r}-1] \subseteq 2G_M$ if and only if the inequality (2.1) holds for every symmetric poset ideal $J$ of $I$ with $(r,r) \not\in J$. That is the Buchsbaum criterion for
the smooth case given in \cite[Theorem 4.7]{Tr1}. 
\end{ex}

Our approach also yields the following interesting relationship between Buchsbaumness and regularity in the smooth case.

\begin{thm} \label{Bm}
Let $0=a_0 \le  a_1 \le \cdots \le a_{2r+1} = d$ be a sequence of integers with $a_{2i-1} < a_{2i}-1$, $i = 1,...,r$, and $R = k\big[x^\a y^{d-\a}|\ \a \in \bigcup_{i=0}^r[a_{2i}, a_{2i+1}\big]$. 
Assume that $a_1 > 0$ and $a_{2r} < d$.
Then $R$ is a Buchsbaum ring if and only if $\reg(R) = 2$.
\end{thm}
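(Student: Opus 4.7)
The plan is to reduce both sides of the equivalence to the same combinatorial condition on $G_M$, using Theorem~\ref{criterion} together with an explicit description of the finite Macaulayfication as the Veronese. The smoothness assumption $a_1 > 0$ and $a_{2r} < d$ forces $1, d-1 \in G_M$, so $x^{d-1}y, xy^{d-1} \in M$. By Example~\ref{Veronese}, this means $R^* = k[N]$ where $N$ is the set of all monomials of degree $d$ in $k[x,y]$, so $G_N = [0,d]$.

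Next I would record two numerical facts. First, since $\{0,d\} \subseteq G_M$ and $G_N = [0,d]$,
\[
[0,2d] \ = \ \{0,d\} + G_N \ \subseteq \ G_M + G_N \ \subseteq \ [0,d] + [0,d] \ = \ [0,2d],
\]
so $G_M + G_N = [0,2d] = 2G_N$. Second, the same identity $\{0,d\} + G_N = 2G_N$ shows via the formula just after Theorem~\ref{criterion} that $r_Q(R^*) = 1$.

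To finish, apply Theorem~\ref{criterion}. Part~(1) gives that $R$ is Buchsbaum iff $G_M + G_N = 2G_M$, which by the computation above is simply $2G_M = [0,2d]$. Part~(2) gives
\[
\reg(R) \ = \ \min\{n \ge 1 \mid nG_M = nG_N\}.
\]
In the non-trivial situation $r \ge 1$ one has $G_M \subsetneq [0,d] = G_N$, hence $\reg(R) \ge 2$, and then $\reg(R) = 2$ is equivalent to $2G_M = 2G_N = [0,2d]$. This is exactly the Buchsbaum condition, proving the equivalence.

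There is no substantive obstacle: once the finite Macaulayfication is identified with the full Veronese via Example~\ref{Veronese}, Theorem~\ref{criterion} immediately funnels both the Buchsbaum property and the equality $\reg(R) = 2$ into the single numerical statement $2G_M = [0,2d]$. The only point requiring a brief comment is the degenerate case $r = 0$, for which $G_M = [0,d]$, $R$ is the Veronese (Cohen--Macaulay with $\reg(R) = 1$), so the claim is understood in the non-trivial range $r \ge 1$.
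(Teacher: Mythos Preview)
Your proof is correct and follows essentially the same route as the paper's: identify $R^*$ as the full Veronese via Example~\ref{Veronese}, compute $G_M + G_N = 2G_N = [0,2d]$ and $r_Q(R^*) = 1$, then apply both parts of Theorem~\ref{criterion} to reduce Buchsbaumness and $\reg(R)=2$ to the common condition $2G_M = [0,2d]$. Your explicit remark on the degenerate case $r=0$ is a useful addendum; the paper's proof simply asserts $G_N \neq G_M$ without comment, which tacitly assumes $r \ge 1$.
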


\begin{proof}
Let $M = \{x^\a y^{d-\a}|\ \a \in \bigcup_{i=0}^r[a_{2i}, a_{2i+1}]\}$ and $N = \{x^{\a}y^{d-\a}| \ \a \in  [0,d]\}$. Then $R = k[M]$ and $R^* = k[N]$. As we have seen in Example \ref{smooth}, $G_M + G_N = [0,2d] = 2G_N$. Therefore, $R$ is a Buchsbaum ring if and only if $2G_N = 2G_M$. 
Let $Q = (x^d,y^d)$. Then $r_Q(R^*) = 1$. 
By Theorem \ref{criterion}(2), this implies $\reg(R) = \min\{n \ge 1|\ nG_N = nG_M\}$. Since $G_N \neq G_M$, $R$ is a Buchsbaum ring if and only if $\reg(R)=2$.
\end{proof}

Now we will study the Buchsbaumness of non-smooth curves of Type A. 

\begin{thm} \label{BmA}
Let $0=a_0 \le  a_1 \le \cdots \le a_{2r+1} = d$ be a sequence of integers with $a_{2i-1} < a_{2i}-1$, $i = 1,...,r$, and $R = k\big[x^\a y^{d-\a}|\ \a \in \bigcup_{i=0}^r[a_{2i}, a_{2i+1}]\big]$. 
Assume that $a_1 = 0$, $a_{2r} < d$, and $2a_2-1 \le a_3$, $r \ge 2$. 
Then the following conditions are equivalent:
\begin{enumerate}[\indent \rm (1)]
\item $R$ is a Buchsbaum ring.
\item The inequality (2.1) holds for every symmetric poset ideal $J$ of $I$ with $(1,1) \in J$ and $(r,r) \not\in J$
\item $\reg(R) = 2$.
\end{enumerate}
\end{thm}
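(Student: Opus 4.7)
The plan mirrors the smooth case (Theorem \ref{Bm}): first determine the finite Macaulayfication $R^*$ explicitly, then apply Theorem \ref{criterion} and Lemma \ref{cover}. I claim $R^* = k[N]$ where $G_N = \{0\} \cup [a_2, d]$. By the uniqueness part of Theorem \ref{CM}(iv), this follows once we check (a) $M \subseteq N$ (clear); (b) $k[N]$ is Cohen-Macaulay, which is the $r=1$ case of Type~A and hence Cohen-Macaulay by \cite{Tr1}; and (c) $k[N]/k[M]$ has finite length. For (c), the hypothesis $2a_2 - 1 \le a_3$ makes the intervals $k[a_2, a_3]$ overlap for every $k \ge 1$, so their union is the infinite tail starting at $a_2$; combined with $d \in G_M$, this forces $nG_M = nG_N = \{0\} \cup [a_2, nd]$ for every sufficiently large $n$.

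With $R^* = k[N]$ in hand, a direct unraveling gives $G_M + G_N = \{0\} \cup [a_2, 2d]$, using $2a_2 \le a_3 + 1 \le d$ and $a_{2i+2} - a_{2i+1} \le d - a_2$ to glue the pieces. Moreover the two end blocks $[a_2, 2a_3]$ and $[2a_{2r}, 2d]$ lie automatically in $2G_M$: the first from $[a_2, a_3] \cup ([a_2, a_3] + [a_2, a_3])$ bridged by $2a_2 \le a_3 + 1$, the second from $[a_{2r}, d] + [a_{2r}, d]$. Hence by Theorem \ref{criterion}(1), $R$ is Buchsbaum if and only if $[2a_3 + 1, 2a_{2r} - 1] \subseteq 2G_M$.

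For $(1) \Leftrightarrow (2)$, invoke Lemma \ref{cover} with $u = 2a_3 + 1$ and $v = 2a_{2r} - 1$. The pair $(1, 1)$ is the unique maximum of $\{a_{2i+1} + a_{2j+1} : (i,j) \in I, \ a_{2i+1}+a_{2j+1} < u\}$, because any larger index produces at least $a_3 + a_5 > 2a_3$ since $a_5 > a_3$; symmetrically $(r, r)$ is the unique minimum of $\{a_{2i} + a_{2j} : (i,j) \in I, \ a_{2i}+a_{2j} > v\}$. The conclusion of Lemma \ref{cover} then reads verbatim as condition (2).

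For $(1) \Leftrightarrow (3)$, the same overlap argument yields $nG_N = \{0\} \cup [a_2, nd]$ for every $n \ge 1$. Comparing $\{0, d\} + G_N$, which leaves the gap $[d+1, a_2 + d - 1]$, against $2G_N$ shows $r_Q(R^*) > 1$, while $\{0, d\} + 2G_N = 3G_N$ forces $r_Q(R^*) = 2$. If (1) holds, then $G_M + G_N = 2G_M$ gives $2G_M = \{0\} \cup [a_2, 2d] = 2G_N$; an induction on $k$ from $G_M + G_N \subseteq 2G_M$ to $G_M + kG_N \subseteq (k+1)G_M$ extends this to $nG_M = nG_N$ for every $n \ge 2$, and Theorem \ref{criterion}(2) together with Proposition \ref{reg} delivers $\reg(R) = \max\{a(R^*/R)+1, r_Q(R^*)\} = 2$. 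Conversely, $\reg(R) = 2$ forces $2G_M = 2G_N$, hence $[2a_3 + 1, 2a_{2r} - 1] \subseteq 2G_M$, yielding (2) and hence (1). The main obstacle is the finite-length verification (c) in the determination of $R^*$: confirming that $nG_M$ eventually catches $nG_N$ relies precisely on the overlap phenomenon made available by $2a_2 - 1 \le a_3$.
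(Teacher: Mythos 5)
Your overall strategy coincides with the paper's: identify the finite Macaulayfication as $k[N]$ with $G_N=\{0\}\cup[a_2,d]$, show $G_M+G_N=2G_N=\{0\}\cup[a_2,2d]$, observe that the end blocks $[a_2,2a_3]$ and $[2a_{2r},2d]$ lie in $2G_M$ automatically so that Buchsbaumness reduces to $[2a_3+1,2a_{2r}-1]\subseteq 2G_M$, feed that interval into Lemma \ref{cover} with $(m,n)=(1,1)$ and $(m',n')=(r,r)$, and combine $r_Q(R^*)=2$ with Theorem \ref{criterion}(2) for the equivalence with $\reg(R)=2$. All of these computations are correct, and your direct evaluation of $r_Q(R^*)$ from $(n+1)G_N$ versus $\{0,d\}+nG_N$ is a nice self-contained substitute for the paper's citation of \cite{La}.

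The one step where you genuinely diverge is the identification $R^*=k[N]$, and that is also where your argument has a gap. The paper verifies $N\subseteq M^*$ elementwise (writing $\a=ta_2+c$ with $a_2+c\le a_3$, which is precisely where $2a_2-1\le a_3$ enters) and applies Corollary \ref{sufficient}; you instead invoke the uniqueness in Theorem \ref{CM}(iv), which obliges you to prove that $k[N]/k[M]$ has finite length, i.e.\ that $nG_M=nG_N$ for all large $n$. That claim is true, but your justification --- the overlapping multiples of $[a_2,a_3]$ ``combined with $d\in G_M$'' --- does not establish it: the $n$-fold sums of $\{0\}\cup[a_2,a_3]\cup\{d\}$ never meet the interval $[(n-1)d+a_3+1,\,nd-1]$, which is nonempty since $a_3\le d-2$, so $d\in G_M$ alone leaves a gap of fixed width below $nd$ for every $n$. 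What actually saves the claim is the hypothesis $a_{2r}<d$, which places the whole interval $[a_{2r},d]$ --- in particular both $d-1$ and $d$ --- inside $G_M$, so that the sums $j\{d-1,d\}+(n-j)\big(\{0\}\cup[a_2,a_3]\big)$ glue into $[a_2,nd]$ once $n$ is large. With that repair (or by simply checking $N\subseteq M^*$ as the paper does) the proof is complete. Two cosmetic points: $(1,1)$ need not be the \emph{unique} maximizer in Lemma \ref{cover} (the pair $(0,2)$ contributes $a_5$, which may well be $\le 2a_3$), but the lemma only asks for a maximizer; and in the implication (1)$\Rightarrow$(3) the induction to $nG_M=nG_N$ for $n>2$ is superfluous, since Theorem \ref{criterion}(2) yields $\reg(R)=2$ as soon as $2G_M=2G_N$ and $r_Q(R^*)=2$.
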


\begin{proof}
Let $M = \{x^{\a}y^{d-\a}|\ a \in \bigcup_{i=0}^r[a_{2i}, a_{2i+1}]\}$ and $N = \{x^{\a}y^{d-\a}|\ \a \in \{0\} \cup [a_2,d]\}$. 
Then $M \subset N$ and $k[N]$ is a Cohen-Macaulay ring by \cite[Corollary 3.4]{Tr1}.
We have $N \setminus M \subseteq \{x^{\a}y^{d-\a}| \ \a \in [a_3+1,a_{2r}-1]\}$. 
For $\a \in [a_3+1,a_{2r}-1]$, write $\a = ta_2 + c$ with $0 \le c < a_2$.
Since $a_3+1 \ge 2a_2$, $t \ge 2$ and $a_2+c  \le a_3$. Hence, $(a_2+c,d-a_2-c) \in E$,
where $E$ is the affine semigroup of $M$. We have
\begin{align*}
(\a,d-\a) + (d-\a-1)e_1 & = (d-\a)(d-1,1) \in E,\\
(\a,d-\a) + (t-1)e_2 & = (t-1)(a_2,d-a_2) + (a_2+c,d-a_2-c) \in E.
\end{align*}
Hence $(\a,d-\a) \in (E-E_1) \cap (E-E_2) = E^*$. From this it follows that
$N \subseteq M^*$. Therefore, $R^* = k[N]$ by Corollary \ref{sufficient}. \par

We have $G_N = \{0\} \cup [a_2,d]$. Hence, $2G_N = \{0\} \cup [a_2,d] \cup [2a_2,2d]$.
Since $2a_2 - 1\le a_3 < d$, 
$$[a_2,d] \cup [2a_2,2d] = [a_2,2d] = [a_2,a_3] \cup [2a_2,2d].$$ 
Note that $[a_2,a_3] \subset G_M$ and $[2a_2,2d] = \{a_2,d\} + [a_2,d] \subset G_M+G_N$. 
Then $2G_N \subseteq G_M+G_N$. Since $G_M \subseteq G_N$, this implies $G_M+G_N = 2G_N$. 
By Theorem \ref{criterion}(1), $R$ is a Buchsbaum ring if and only if $2G_N = 2G_M$.
\par

On the other hand, we have $[a_2,2a_3] = [a_2,a_3] \cup [2a_2,2a_3] \subseteq 2G_M$ and $[2a_{2r},2d] \subseteq 2G_M$. Since $2G_N = \{0\} \cup [a_2,2d]$, this implies  
$$2G_N \setminus 2G_M \subseteq [a_2,2d] \setminus ([a_2,2a_3]\cup[2a_{2r},2d]) 
=  [2a_3+1,2a_{2r}-1].$$
Therefore, $2G_N = 2G_M$ if and only if $[2a_3+1,2a_{2r}-1] \subseteq 2G_M$. 

It is clear that $a_3+a_3$ is the maximum of all values 
$a_{2i+1}+a_{2j+1} < 2a_3+1$ and $a_{2r}+a_{2r}$ is the minimum of all values $a_{2i}+a_{2j} > 2a_{2r}-1$, $(i,j) \in I$. By Lemma \ref{cover}, $[2a_3+1,2a_{2r}-1] \subseteq 2G_M$ if and only if the inequality (2.1) holds for every symmetric poset ideal $J$ of $I$ with $(1,1) \in J$ and $(r,r) \not\in J$.
This shows the equivalence of the conditions (1) and (2).

Now we are going to prove the equivalence of the conditions (1) and (3).
Let $Q = (x^d,y^d)$. By \cite[Theorem 2.1]{La}, $r_Q(R^*) =  \left\lceil \dfrac {d-1}{d-a_2}\right\rceil.$
Since $2a_2 -1 \le a_3 < d$, $d-1 < 2(d-a_2)$. Hence $r_Q(R^*) = 2$. 
By Theorem \ref{criterion}(2), this implies 
$$\reg(R) =  \min\{n \ge 2|\ nG_N = nG_M\}.$$
We have shown above that $R$ is a Buchsbaum ring if and only if $2G_N = 2G_M$.
Therefore, $R$ is a Buchsbaum ring if and only if $\reg(R) = 2$.  
\end{proof}

One can easily write down the inequalities (2.1) in Theorem \ref{BmA}(2).
For instance, the case $r = 2$ yields the following concrete criterion.

\begin{cor} \label{BmA1}
Let $1 < a <  b <  c  < d$ be a sequence of integers with $b < c-1$ and  
$R = k\big[x^\a y^{d-\a}|\ \a \in \{0\} \cup [a,b] \cup [c,d]\big]$. Assume that $2a -1 \le b$.
Then $R$ is a Buchsbaum ring if and only if 
\begin{align*}
2b + 1 & \ge c,\\
 \max\{2b,d\} + 1 &  \ge a+c,\\
b + d +1 & \ge 2c. 
\end{align*}
\end{cor}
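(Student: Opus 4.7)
The plan is to apply Theorem \ref{BmA} in the case $r=2$, under the identification $a_0 = a_1 = 0$, $a_2 = a$, $a_3 = b$, $a_4 = c$, $a_5 = d$. The hypothesis $2a_2 - 1 \le a_3$ of the theorem becomes $2a-1 \le b$, which is given. By the equivalence of conditions (1) and (2) there, $R$ is Buchsbaum if and only if inequality (2.1) holds for every symmetric poset ideal $J$ of $I = \{(m,n) : 0 \le m,n \le 2\}$ with $(1,1) \in J$ and $(2,2) \notin J$. The corollary is then reduced to a finite combinatorial check.

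The first step is to enumerate all such $J$. Since $J$ is downward closed and contains $(1,1)$, it must contain every $(i,j)$ with $i,j \le 1$. Since $(2,2) \notin J$ and $J$ is symmetric, the only freedom is whether to include the pair $\{(0,2),(2,0)\}$ and the pair $\{(1,2),(2,1)\}$; including $(1,2)$ forces $(0,2)$ by downward closure. This yields exactly three symmetric poset ideals,
\[
J_1 = \{(0,0),(0,1),(1,0),(1,1)\}, \quad J_2 = J_1 \cup \{(0,2),(2,0)\}, \quad J_3 = J_2 \cup \{(1,2),(2,1)\}.
\]

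Next, for each $J_s$ I would read off $(J_s)_{\max}$ and $(I \setminus J_s)_{\min}$ and translate (2.1) into the $a_i$'s. For $J_1$ the maximum is $(1,1)$ and the minimal elements of the complement are $(0,2),(2,0)$, giving $2a_3 + 1 \ge a_0 + a_4$, i.e., $2b+1 \ge c$. For $J_2$ the maxima are $(1,1),(0,2),(2,0)$ and the complement-minima are $(1,2),(2,1)$, giving $\max\{2a_3,a_1+a_5\} + 1 \ge a_2 + a_4$, i.e., $\max\{2b,d\}+1 \ge a+c$. For $J_3$ the maxima are $(1,2),(2,1)$ and the only complement-minimum is $(2,2)$, giving $a_3 + a_5 + 1 \ge 2a_4$, i.e., $b+d+1 \ge 2c$. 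These are precisely the three inequalities in the statement.

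The only real obstacle, which is purely bookkeeping, is verifying that the enumeration of symmetric poset ideals is exhaustive and that $(J_s)_{\max}$ and $(I \setminus J_s)_{\min}$ are identified correctly in each case; both tasks are routine for the six-element poset $I$. Once they are dispatched, Corollary \ref{BmA1} follows directly from Theorem \ref{BmA}.
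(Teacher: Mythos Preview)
Your proof is correct and follows exactly the paper's approach: enumerate the three symmetric poset ideals of $I$ containing $(1,1)$ but not $(2,2)$, and read off inequality (2.1) for each. One trivial slip: $I=\{0,1,2\}^2$ has nine elements, not six, but this does not affect anything in your argument.
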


\begin{proof}
There are 3 different symmetric posets $J$ of $I$ with $(1,1) \in J$ and $(2,2) \not\in J$:
\begin{enumerate}
\item $\{(0,1),(1,0),(1,1)\}$,
\item $\{(0,1),(1,0),(0,2),(2,0),(1,1)\}$,
\item $\{(0,1),(1,0),(0,2),(2,0),(1,1),(1,2),(2,1)\}$.
\end{enumerate}
By (2.1), these posets yield the 3 inequalities in the statement.
\end{proof}

\begin{ex} \label{BmA2}
Let $R = k[x^d,x^{d-1}y,x^3y^{d-3},x^2y^{d-2},y^d]$, $d \ge 6$. Then $a = 2$, $b=3$, and $c = d-1$, which satisfy the assumption of Corollary \ref{BmA1}. Using Corollary \ref{BmA1} we can easily check that $R$ is a Buchsbaum ring if and only if $d = 6$.  
\end{ex}

For non-smooth curves of Type B we have the following criterion for the Buchsbaumness.

\begin{thm} \label{BmB}
Let $0=a_0 \le  a_1 \le \cdots \le a_{2r+1} = d$ be a sequence of integers with $a_{2i-1} < a_{2i}-1$, $i = 1,...,r$, and $R = k\big[x^\a y^{d-\a}|\ \a \in \bigcup_{i=0}^r[a_{2i}, a_{2i+1}]\big]$. 
Assume that $a_1 = 0$, $a_{2r} = d$, $2a_2 - 1 \le a_3$, and $a_{2r-2}+d-1 \le a_{2r-1}$, $r \ge 3$. 
Then the following conditions are equivalent:
\begin{enumerate}[\indent \rm (1)]
\item $R$ is a Buchsbaum ring.
\item The inequality (2.1) holds for every symmetric poset ideal $J$ of $I$ with $(1,1) \in J$ and $(r-1,r-1) \not\in J$.
\item $\reg(R) = 2$.
\end{enumerate}
\end{thm}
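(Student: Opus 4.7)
The plan is to follow the template of Theorem \ref{BmA} as closely as possible. First I would identify the finite Macaulayfication as $R^* = k[N]$ where $N = \{x^\a y^{d-\a} \mid \a \in \{0,d\} \cup [a_2, a_{2r-1}]\}$, i.e., the set that fills only the interior gaps of $G_M$ and preserves the outer gaps $[1, a_2-1]$ and $[a_{2r-1}+1, d-1]$. The ring $k[N]$ is Cohen-Macaulay because it is the $r = 2$ case of Type B (with $a_{2r-1}$ playing the role of $a_3$), already recalled from \cite{Tr1}; the two Type B inequalities for $r \ge 3$ specialize to the required inequalities for the $r = 2$ ring. By Corollary \ref{sufficient}, it then remains to show $N \subseteq M^*$, i.e., that $(\a, d-\a) \in (E - E_1) \cap (E - E_2)$ for every $\a \in [a_3+1, a_{2r-2}-1]$.

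The containment in $E - E_2$ reuses Theorem \ref{BmA} verbatim: writing $\a = ta_2 + c$ with $0 \le c < a_2$, the hypothesis $2a_2 - 1 \le a_3$ forces $t \ge 2$ and $a_2 + c \le a_3$, which gives
\[(\a, d-\a) + (t-1)e_2 = (t-1)(a_2, d-a_2) + (a_2+c, d-a_2-c) \in E.\]
The substantive new step is $E - E_1$, since Type B no longer has $(d-1,1) \in E$. Here I would use the dual element $(a_{2r-1}, d-a_{2r-1}) \in E$ together with the symmetric condition $a_{2r-2} + d - 1 \le 2a_{2r-1}$. Writing $d - \a = s(d-a_{2r-1}) + c'$ with $0 \le c' < d - a_{2r-1}$ yields $s \ge 2$; the symmetric condition rewrites as $d - a_{2r-1} \le a_{2r-1} - a_{2r-2} + 1$, so $c' \le a_{2r-1} - a_{2r-2}$ and hence $a_{2r-1} - c' \in [a_{2r-2}, a_{2r-1}] \subseteq G_M$. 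The identity
\[(\a, d-\a) + (s-1)e_1 = (s-1)(a_{2r-1}, d-a_{2r-1}) + (a_{2r-1}-c', d-a_{2r-1}+c')\]
then certifies membership in $E$. This mirror construction is the main technical obstacle of the proof.

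With $R^* = k[N]$ in hand, the equivalence (1) $\Leftrightarrow$ (2) follows by interval arithmetic parallel to Theorem \ref{BmA}. The two Type B inequalities give
\[G_M + G_N = 2G_N = \{0\} \cup [a_2, a_{2r-1}+d] \cup \{2d\},\]
so by Theorem \ref{criterion}(1), $R$ is Buchsbaum iff $2G_M = 2G_N$, i.e., iff $[a_2, a_{2r-1}+d] \subseteq 2G_M$. The inclusions $[a_2, 2a_3] \subseteq 2G_M$ (from $[a_2,a_3] \cup [2a_2,2a_3]$ via $2a_2 - 1 \le a_3$) and $[2a_{2r-2}, a_{2r-1}+d] \subseteq 2G_M$ (by the symmetric argument using $a_{2r-2}+d-1 \le 2a_{2r-1}$) reduce the problem to $[2a_3+1, 2a_{2r-2}-1] \subseteq 2G_M$. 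Applying Lemma \ref{cover} with $(m,n) = (1,1)$ and $(m',n') = (r-1, r-1)$ recovers condition (2).

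For (1) $\Leftrightarrow$ (3), the remaining ingredient is $r_Q(R^*) = 2$, which follows from a direct check that $3G_N = \{0,d\} + 2G_N = \{0\} \cup [a_2, a_{2r-1}+2d] \cup \{3d\}$ while $2G_N \ne \{0,d\} + G_N$ in general. Theorem \ref{criterion}(2) then reads $\reg(R) = \min\{n \ge 2 \mid nG_M = nG_N\}$, which equals $2$ precisely when $2G_M = 2G_N$, i.e., when (1) holds. The entire proof runs in parallel with Theorem \ref{BmA}, with the $E - E_1$ argument powered by the second Type B inequality being the only genuinely new step.
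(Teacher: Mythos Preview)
Your proposal is correct and follows essentially the same route as the paper's proof: identifying $R^*=k[N]$ with $G_N=\{0,d\}\cup[a_2,a_{2r-1}]$, verifying $N\subseteq M^*$ via the dual write-ups for $E-E_1$ and $E-E_2$ (the paper abbreviates this step to ``similarly as in the proof of Theorem~\ref{BmA}''), computing $G_M+G_N=2G_N=\{0,2d\}\cup[a_2,a_{2r-1}+d]$, and reducing to $[2a_3+1,2a_{2r-2}-1]\subseteq 2G_M$ via Lemma~\ref{cover}. The only cosmetic difference is that the paper obtains $r_Q(R^*)=2$ by quoting the formula $r_Q(R^*)=\lceil(a_2+d-1)/a_{2r-1}\rceil$ from \cite{La}, whereas you propose the direct interval check; both work.
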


\begin{proof}
Let $M = \{x^{\a}y^{d-\a}|\ a \in \bigcup_{i=0}^r[a_{2i}, a_{2i+1}]\}$ and $N = \{x^{\a}y^{d-\a}| \ \a \in \{0,d\} \cup [a_2,a_{2r-1}]\}$. 
Then $M \subset N$ and $k[N]$ is a Cohen-Macaulay ring by \cite[Theorem 2.1]{Tr1}. 
We have $N \setminus M \subseteq \{x^{\a}y^{d-\a}| \ \a \in [a_3+1,a_{2r-2}-1]\}$. 

Let $E$ be the affine semigroup of $R = k[M]$.
Similarly as in the proof of Theorem \ref{BmA} we can use the assumptions $2a_2 - 1 \le a_3$ and $a_{2r-2} +d-1 \le 2a_{2r-1}$ to show that $(\a,d-\a) \in (E-E_1) \cap (E-E_2) = E^*$ for $\a \in [a_3+1,a_{2r-2}-1]$. Thus, $\{x^{\a}y^{d-\a}| \ \a \in [a_3+1,a_{2r-2}-1]\} \subset M^*$.
From this it follows that $N \subseteq M^*$. Therefore, $R^* = k[N]$ by Corollary \ref{sufficient}. \par

We have $G_N = \{0,d\} \cup [a_2,a_{2r-1}]$. Hence, 
$$2G_N = \{0,d,2d\} \cup [a_2,a_{2r-1}]  \cup [2a_2,2a_{2r-1}] \cup [a_2+d,a_{2r-1}+d].$$
Since $2a_2 - 1 \le a_3 < a_{2r-1}$ and $a_2 + d - 1 < a_{2r-2} +d-1 \le 2a_{2r-1}$, 
\begin{align*}
[a_2,a_{2r-1}] \cup [2a_2,2a_{2r-1}] & = [a_2,2a_{2r-1}],\\
 [2a_2,2a_{2r-1}] \cup [a_2+d,a_{2r-1}+d] & = [2a_2,a_{2r-1}+d].
\end{align*} 
Therefore,
\begin{align*}
2G_N & = \{0,2d\} \cup [a_2,a_{2r-1}+d]\\
& = \{0,2d\} \cup [a_2,a_3]  \cup [2a_2,2a_{2r-1}] \cup [a_{2r-2}+d,a_{2r-1}+d].
\end{align*}
Notice that $[a_2,a_3], [a_{2r-2}+d,a_{2r-1}+d] \subset 2G_M$ and
$$[2a_2,2a_{2r-1}] = \{a_2,a_{2r-1}\} + [a_2,a_{2r-1}]  \subset G_M+G_N.$$ 
Then $2G_N \subseteq G_M+G_N$. This implies $G_M+G_N = 2G_N$. 
It follows from Theorem \ref{criterion}(1) that $R$ is a Buchsbaum ring if and only if $2G_N = 2G_M$. \par

On the other hand, we have
\begin{align*}
[a_2,2a_3] & = [a_2,a_3] \cup [2a_2,2a_3] \subset 2G_M,\\
[2a_{2r-2},a_{2r-1}+d] & = [2a_{2r-2},2a_{2r-1}] \cup [a_{2r-2}+d,a_{2r-1}+d]  \subset 2G_M.
\end{align*}
Since $2G_N = \{0,2d\} \cup [a_2,a_{2r-1}+d]$, 
\begin{align*}
2G_N \setminus 2G_M & \subseteq [a_2,a_{2r-1}+d] \setminus ([a_2,2a_3] \cup [2a_{2r-2},a_{2r-1}+d]) \\                 & = [2a_3+1,2a_{2r-2}-1].
\end{align*}
Therefore, $2G_N = 2G_M$ if and only if $[2a_3+1,2a_{2r-2}-1] \subseteq 2G_M$. \par

It is clear that $a_3+a_3$ is the maximum of all values 
$a_{2i+1}+a_{2j+1} < 2a_3+1$ and $a_{2r-2}+a_{2r-2}$ is the minimum of all values $a_{2i}+a_{2j} > 2a_{2r-2}-1$, $(i,j) \in I$. By Lemma \ref{cover}, $[2a_3+1,2a_{2r-2}-1] \subseteq 2G_M$ if and only if 
the inequality (2.1) holds for every symmetric poset ideal $J$ of $I$ with $(1,1) \in J$ and $(r-1,r-1) \not\in J$. This shows the equivalence of the conditions (1) and (2).

Let $Q = (x^d,y^d) \subset R^*$. By \cite[Theorem 3.3(1)]{La}, $r_Q(R^*) =  \left\lceil \dfrac {a_2+d-1}{a_{2r-1}}\right\rceil.$
Since $2a_{2r-1} \ge a_{2r-2}+d-1 > a_2+d-1$, $r_Q(R^*) = 2$. By Theorem \ref{criterion}(2), this implies 
$$\reg(R) =  \min\{n \ge 2|\ nG_N = nG_M\}.$$ 
We have seen above that $R$ is a Buchsbaum ring if and only if $2G_N = 2G_M$.
Therefore, $R$ is a Buchsbaum ring if and only if $\reg(R) = 2$. This shows the equivalence of the conditions (1) and (3).
\end{proof}

For the case $r = 3$ of Type B we have the following concrete criterion.

\begin{cor} \label{BmB1}
Let $1 < a < b < c < e < d-1$ be a sequence of integers with $b +1 < c$ and 
$R = k\big[x^\a y^{d-\a}|\ \a \in \{0,d\} \cup [a,b] \cup [c,e]\big]$. 
Assume that $2a \le b +1$ and $c+d-1 \le 2e$.
Then $R$ is a Buchsbaum ring if and only if 
\begin{align*}
2b+1&\ge c,\\
\max \left\{2b, e\right\}+1&\ge \min \left\{a+c, d\right\},\\
b+e+1&\ge \min \left\{2c, d\right\},\\
\max \left\{2b, d\right\}+1&\ge a+c,\\
\max \left\{d, b+e\right\}+1&\ge \min \left\{a+d, 2c\right\},\\
b+d+1&\ge 2c.
\end{align*}
\end{cor}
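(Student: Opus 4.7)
The plan is to apply Theorem \ref{BmB} with $r = 3$, which reduces the Buchsbaum condition to the verification of the inequality (2.1) for every symmetric poset ideal $J$ of $I = \{(m,n) \in \NN^2 : 0 \le m,n \le 3\}$ satisfying $(1,1) \in J$ and $(2,2) \notin J$. With the identifications $a_0 = a_1 = 0$, $a_2 = a$, $a_3 = b$, $a_4 = c$, $a_5 = e$, and $a_6 = a_7 = d$, each instance of (2.1) becomes a numerical inequality in $a,b,c,e,d$. The task is therefore purely combinatorial: enumerate all such $J$ and match them to the six inequalities stated.

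First I would observe that any admissible $J$ automatically contains the down-set of $(1,1)$, namely $\{(0,0),(0,1),(1,0),(1,1)\}$, and cannot contain any element $(i,j)$ with $\min(i,j) \ge 2$, since such an element lies above $(2,2)$. Symmetry and downward closure then force the remaining elements to be added in reflection-pairs drawn from $\{(0,2),(2,0)\}$, $\{(0,3),(3,0)\}$, $\{(1,2),(2,1)\}$, $\{(1,3),(3,1)\}$, subject to the coverings $(1,2) > (0,2)$, $(0,3) > (0,2)$, and $(1,3) > (0,3),(1,2)$. A short case analysis produces exactly six such poset ideals. For each one I would identify $J_{\max}$ and $(I \setminus J)_{\min}$, evaluate the sums $a_{2i+1}+a_{2j+1}$ on the former and $a_{2i}+a_{2j}$ on the latter, and record the resulting instance of (2.1). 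For instance, the smallest admissible ideal has $J_{\max} = \{(1,1)\}$ and complement-minimum $\{(0,2),(2,0)\}$, yielding $2b + 1 \ge c$; the largest has $J_{\max} = \{(1,3),(3,1)\}$ and complement-minimum $\{(2,2)\}$, yielding $b + d + 1 \ge 2c$. The four intermediate ideals account for the four remaining inequalities, each involving a genuine $\max$ or $\min$ because two or more maximal or minimal elements contribute distinct sums (e.g. the pair $(1,1),(0,2)$ on the max-side gives the term $\max\{2b,e\}$).

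The only step requiring care is this combinatorial bookkeeping: producing the complete list of six poset ideals and their maximal and complement-minimal elements without omission or overcounting. Once the list is fixed, the translation from (2.1) into the arithmetic inequalities is mechanical, and the six resulting inequalities match those of the corollary in the stated order.
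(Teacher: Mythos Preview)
Your proposal is correct and follows essentially the same approach as the paper: apply Theorem~\ref{BmB} with $r=3$, enumerate the six symmetric poset ideals $J$ of $I$ with $(1,1)\in J$ and $(2,2)\notin J$, and read off inequality~(2.1) for each. Your analysis of the admissible additions via the four symmetric pairs and their covering relations is in fact more detailed than the paper's proof, which simply lists the six ideals and asserts that they yield the stated inequalities.
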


\begin{proof}
There are 6 different symmetric posets $J$ of $I$ with $(1,1) \in J$ and $(2,2) \not\in J$:
\begin{enumerate}
\item $\{(0,1),(1,0),(1,1)\}$,
\item $\{(0,1),(1,0),(0,2),(2,0),(1,1)\}$,
\item $\{(0,1),(1,0),(0,2),(2,0),(1,1),(1,2),(2,1)\}$,
\item $\left\{(0, 0), (0, 1), (1, 0), (1, 1), (0, 2), (2, 0), (0, 3), (3, 0)\right\},$
\item $\left\{(0, 0), (0, 1), (1, 0), (1, 1), (0, 2), (2, 0), (0, 3), (3, 0), (1, 2), (2, 1)\right\},$
\item $\left\{(0, 0), (0, 1), (1, 0), (1, 1), (0, 2), (2, 0), (0, 3), (3, 0), (1, 2), (2, 1), (1, 3), (3, 1)\right\}.$
\end{enumerate}
By (2.1), these posets yield the 6 inequalities in the statement.
\end{proof}

\begin{ex}
Let $R = k[x^d,x^{d-2}y^2,x^{d-3}y^3,x^3y^{d-3},x^2y^{d-2},y^d]$, $d \ge 8$. Then $a=2$, $b=3$, $c=d-3$, and $e = d-2$, which satisfies the assumption of Corollary \ref{BmB1}. Using Corollary \ref{BmB1} we can easily check that $R$ is a Buchsbaum ring if and only if $d = 8,9, 10$. 
\end{ex}

%%%%%%%%%%%%%%%%%%%%%%%%

\section{Regularity estimates}

We adhere to the notations of the preceding sections. Let $M$ be a set of monomials of degree $d$ in $k[x,y]$ and $R = k[M]$. Let $0=a_0 \le  a_1 \le \cdots \le a_{2r+1} = d$ be a sequence of integers with $a_{2i-1} < a_{2i}-1$, $i = 1,...,r$, such that
$$G_M =  \bigcup_{i=0}^r[a_{2i}, a_{2i+1}].$$
The integer gaps between the intervals of $G_M$ are the integer intervals $[a_{2i-1}+1,a_{2i}-1]$, whose length is $\ell_i := a_{2i} - a_{2i-1}-1$, $i = 1,...,r$. The aim of this section is to estimate $\reg(R)$ in terms of the maximal length of integer gaps.

By the Goto-Eisenbud regularity conjecture \cite{EG}, which was proved by Gruson, R. Lazarsfeld and C. Peskine \cite{GLP} for curves, we know that 
$\reg(R) \le e(R) - \text{codim}(R)$, where $e(R)$ denotes the multiplicity of $R$. In general, this conjecture  does not hold as shown recently by McCullough and Peeva \cite{MP}.
Since $e(R) \le d$ and
$\text{codim}(R) = |M| -2$, we get $e(R) - \text{codim}(R) \le d - |M|+2$. It is easy to see that $d - |M|+1 = \sum_{i=1}^r \ell_i$. Therefore, $\reg(R) \le \sum_{i=1}^r \ell_i+1$. However, this bound is far from the best possible. 
L'vosky \cite[Proposition 5.5]{Lv} showed that $\reg(R) \le \ell +\ell'+1$, where $\ell$ and $\ell'$ denote the largest and second largest length of the gaps. If there is only a gap, we set $\ell' = 0$. 

In the smooth case $a_1 > 0$ and $a_{2r} < d$,
Hellus, Hoa and St\"uckrad \cite[Theorem 2.7]{HHS} gave the bound
$$\reg(R) \le \left\lfloor \dfrac{\ell-1}{\e}\right\rfloor +2,$$ 
where $\e = \min\{a_1, d-a_{2r}\}$, which is only a fraction of the bound of L'vosky. 
They also presented some cases where this bound is attained.  By the proof of Theorem \ref{Bm},
$\reg(R) = \min\{n\ge 1|\ nG_M = [0,nd]\}$ in this case. This fact was observed already in \cite[Corollary 2.5(ii)]{HHS}, where it was used to prove the above bound.

For non-smooth curves of Types A and B we have shown in the proofs of Theorems \ref{BmA} and \ref{BmB} that the finite Macaulayfication of $R$ is generated by a set $N$ of monomials of degree $d$ and that $\reg(R) = \min\{n \ge 2|\ nG_M = nG_N\}.$ 
To estimate $nG_M$ and $nG_N$ we shall need the following observations. The reader may skip this technical part and goes directly to the main results of this section, which establish regularity bounds for non-smooth curves of Types A and B, starting with Theorem \ref{regA}.

\begin{lem} \label{fraction1}
Let $0 \le a < b$ be integers such that $2a - 1\le b$.
Let $H = \{0\} \cup [a,b]$. Then
\begin{enumerate}[\indent \rm (1)]
\item $[a,nb] \subseteq nH$ for all $n \ge 1$.
\item For all integers $c > \a > b$, $\a + nb \in nH+c$ for $n \ge \left\lfloor \dfrac{a + c - \a - 1}{b}\right\rfloor+1$.
\end{enumerate}
\end{lem}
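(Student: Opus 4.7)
The plan is to prove (1) by induction on $n$ and to derive (2) as an immediate consequence.

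For part (1), I would argue by induction on $n$. The base case $n = 1$ is immediate since $H \supseteq [a, b]$. For the inductive step, I would take $m \in [a, nb]$ with $n \ge 2$ and split into three cases according to the size of $m$. If $m \in [a, b]$, then $m \in H$, and padding with $n - 1$ zeros gives $m \in nH$. If $m \ge a + b$, then $m - b$ lies in $[a, (n-1)b]$, which is contained in $(n-1)H$ by induction, so $m = b + (m - b) \in nH$. The crucial intermediate case is $b < m < a + b$, where the direct decomposition $m = b + (m-b)$ fails because $m - b < a$. Here I would write $m = a + (m - a)$; the hypothesis $2a - 1 \le b$ is exactly what forces $b + 1 - a \ge a$, so that $m - a$ lies in $[b+1-a, b-1] \subseteq [a, b] \subseteq H$, and padding with $n - 2$ zeros completes the argument.

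For part (2), I would reduce directly to (1). Since $nH + c = \{u + c \mid u \in nH\}$, the claim $\alpha + nb \in nH + c$ is equivalent to $\alpha + nb - c \in nH$. By (1) it suffices to check that $\alpha + nb - c \in [a, nb]$. The upper bound $\alpha + nb - c \le nb$ is just $\alpha \le c$ (which follows from $c > \alpha$). The lower bound $\alpha + nb - c \ge a$ rearranges to $nb \ge a + c - \alpha$; since every quantity is an integer, this is equivalent to $nb > a + c - \alpha - 1$, and the hypothesis $n \ge \left\lfloor (a + c - \alpha - 1)/b \right\rfloor + 1$ guarantees precisely this.

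The only nontrivial step is the middle case of the induction in (1); that is the sole place where the numerical hypothesis $2a - 1 \le b$ is invoked, and it is tight, since without it the integers just above $b$ would be unreachable in $nH$.
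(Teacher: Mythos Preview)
Your proof is correct. For part (2) it is essentially identical to the paper's argument: both reduce to checking $\alpha + nb - c \in [a,nb]$ and invoke part (1).

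For part (1) the two arguments differ in organization. The paper does not induct; instead it observes that $2a-1\le b$ gives $ib \ge (i+1)a-1$ for every $i\ge 1$, so the consecutive intervals $[ia,ib]$ and $[(i+1)a,(i+1)b]$ overlap, and hence
\[
[a,nb]=\bigcup_{i=1}^n [ia,ib]=\bigcup_{i=1}^n i[a,b]\subseteq nH,
\]
the last inclusion coming from padding $i[a,b]$ with $n-i$ zeros. Your induction with the three-case split achieves the same thing step by step: your ``middle case'' $b<m<a+b$, where you write $m=a+(m-a)$ and use $2a-1\le b$ to place $m-a$ back in $[a,b]$, is exactly the overlap $[a,b]\cup[2a,2b]=[a,2b]$ that the paper handles uniformly for all $i$. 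Both proofs are elementary and of comparable length; the paper's decomposition is a bit more transparent about \emph{why} the hypothesis $2a-1\le b$ is the right one (it is precisely the condition that successive dilates $i[a,b]$ cover without gaps), while your inductive version localizes that insight to a single case.
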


\begin{proof}
(1) Since $2a - 1\le b$, we have $ib \ge (i+1)a-1$ for all $i \ge 1$. From this it follows that
$[ia,ib] \cup [(i+1)a,(i+1)b] = [ia,(i+1)b]$. Therefore,
$$[a,nb] = [a,b] \cup [2a,2b] \cup \cdots \cup [na,nb] = 
\bigcup_{i=1}^n i[a,b] = \bigcup_{i=1}^n ((t-i)0 +i[a,b]) \subset nH.$$

(2) We have $a + c - \a -1 < nb$. Hence, $a \le \a + nb - c \le nb$. This implies $\a + nb - c \in [a,nb] \subseteq nH$. Therefore, $\a + nb \in nH + c$.
\end{proof}

\begin{lem} \label{fraction2}
Let $0 < c < e \le d$ be integers such that $d+c -1 \le 2e$.
Let $H = [c,e]  \cup \{d\}$. Then
\begin{enumerate}[\indent \rm (1)]
\item $[nc,e+(n-1)d] \subseteq nH$ for all $n \ge 1$.
\item For all integers $b <  \a < c$, $\a + nc \in nH + b$ for $n \ge \left\lfloor \dfrac{d- e - b + \a - 1}{d-c}\right\rfloor+1$.
\end{enumerate}
\end{lem}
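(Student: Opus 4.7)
The plan is to mirror the short proof of Lemma \ref{fraction1} via the symmetry that exchanges the roles of $0$ and $d$ and of the intervals $[a,b]$ and $[c,e]$. Under this symmetry the hypothesis $d + c - 1 \le 2e$ is the direct analogue of $2a - 1 \le b$, so the same two-step pattern should dispatch both parts.

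For (1), I would write a typical element of $nH$ as the sum of $i$ numbers drawn from $[c,e]$ and $n-i$ copies of $d$, for some $0 \le i \le n$. Since $i$ summands from $[c,e]$ realise every integer in $[ic,ie]$, this gives
$$nH \;\supseteq\; \bigcup_{i=0}^{n} I_i, \qquad I_i := \bigl[\,ic + (n-i)d,\; ie + (n-i)d\,\bigr].$$
Because $c < d$, the intervals $I_i$ slide leftward as $i$ grows, so $I_n = [nc, ne]$ is the leftmost and $I_1 = [c + (n-1)d,\; e + (n-1)d]$ the rightmost among those appearing in the claimed union. A direct calculation shows that $I_{i-1}$ begins at most one integer past the end of $I_i$ precisely when $i(e-c) \ge d - c - 1$, and the hypothesis $d + c - 1 \le 2e$ is exactly the case $i = 2$ of this inequality; every $i \ge 2$ therefore works. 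Hence the intervals $I_1, I_2, \dots, I_n$ concatenate into the single interval $[nc,\; e + (n-1)d]$, proving (1). The case $n = 1$ collapses to the trivial $[c,e] \subseteq H$.

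For (2), by (1) it suffices to verify that $\a + nc - b$ lies in $[nc,\; e + (n-1)d]$ whenever $n$ satisfies the stated bound. The lower inequality $\a + nc - b \ge nc$ is immediate from $\a > b$. The upper inequality $\a + nc - b \le e + (n-1)d$ rearranges to $n(d-c) \ge d - e - b + \a$; invoking the standard identity $\lceil m/k \rceil = \lfloor (m-1)/k \rfloor + 1$ for integer $m$ and positive integer $k$, this is equivalent to $n \ge \lfloor (d - e - b + \a - 1)/(d-c) \rfloor + 1$, i.e.\ exactly the hypothesis.

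I do not anticipate a genuine obstacle. The only step requiring any thought is lining up the merging threshold in (1) with the hypothesis $d + c - 1 \le 2e$, and that matching is precisely what the hypothesis is designed to enforce; everything else is bookkeeping dual to the proof of Lemma \ref{fraction1}.
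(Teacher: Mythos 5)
Your proof is correct and takes essentially the same route as the paper: both rest on the reflection $\alpha \mapsto d-\alpha$, which turns $H$ into $\{0\}\cup[d-e,d-c]$ and reduces the lemma to the interval-merging argument behind Lemma \ref{fraction1}. The only difference is that the paper formally invokes Lemma \ref{fraction1} on the reflected set, whereas you rerun the concatenation of the intervals $I_i$ and the floor/ceiling bookkeeping directly in the original coordinates; your threshold $i(e-c)\ge d-c-1$ and the identification of the hypothesis with the case $i=2$ both check out.
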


\begin{proof}
Let $a' = d-e$ and $b' = d-c$. Then $2a' - 1 = 2(d-e)-1 \le 2d - (d+c) = b'$.
Let $H' = \{0\} \cup [a',b']$. Then $nH = \{nd-\a|\ \a \in nH'\}$.
By Lemma \ref{fraction1}(1), $[a',nb'] \subseteq nH'$ for all $n \ge 1$. 
Therefore, $[nc,e+(n-1)d] \subseteq nH$. 

For all integers $b < \a < c$, we have $d-b > d-\a > b'$. 
By Lemma \ref{fraction1}(2), 
$b - \a + nb' = (d-\a) + nb' - (d-b) \in nH'$ for $n \ge \left\lfloor \dfrac{a' - b + \a - 1}{b'}\right\rfloor+1$.
Thus, $nc - b +\a = nd - (b-\a + nb') \in nH$. 
Hence, $\a + nc \in nH+b$ for $n \ge \left\lfloor \dfrac{d- e - b + \a - 1}{d-c}\right\rfloor+1$.
\end{proof}

\begin{lem} \label{fraction3}
Let $0 < b < c < d$ be integers. Let $H = [0,b] \cup [c,d]$. Then 
\begin{enumerate}[\indent \rm (1)]
\item $nb + 1 \not\in nH$ for $n \le \left\lfloor\dfrac{c - 2}{b}\right\rfloor$.
\item $nc - 1 \not\in nH$ for $n \le  \left\lfloor \dfrac{d - b - 2}{d-c}\right\rfloor$.
\end{enumerate}
\end{lem}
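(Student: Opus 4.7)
My plan is to make every element of $nH$ explicit by grouping the summands according to which of the two intervals $[0,b]$ and $[c,d]$ they lie in. If exactly $k$ of the $n$ summands are taken from $[0,b]$ and the remaining $n-k$ from $[c,d]$, the sum lies in $k[0,b] + (n-k)[c,d] = [(n-k)c,\; kb+(n-k)d]$, so
\[
nH \;=\; \bigcup_{k=0}^{n} \bigl[(n-k)c,\; kb + (n-k)d\bigr].
\]
Both statements then reduce to showing that the candidate points $nb+1$ and $nc-1$ miss every one of these intervals.

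For part (1), the hypothesis $n \le \lfloor(c-2)/b\rfloor$ gives $nb \le c-2$, hence $nb + 1 \le c-1$. The interval corresponding to $k=n$ is $[0,nb]$, which excludes $nb+1$ from above. For $k \le n-1$ we have $n-k \ge 1$, so the left endpoint satisfies $(n-k)c \ge c > nb+1$, and thus $nb+1$ lies strictly below the interval $[(n-k)c,\; kb+(n-k)d]$. Consequently $nb+1 \notin nH$.

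For part (2), I will exploit the involution $\alpha \mapsto d - \alpha$, which carries $H = [0,b] \cup [c,d]$ bijectively onto $H' := [0,\, d-c] \cup [d-b,\, d]$ and so sends $nH$ onto $nd - nH$ (equivalently $nH'$). Therefore $nc - 1 \in nH$ if and only if $nd - (nc-1) = n(d-c) + 1 \in nH'$. Writing $b' := d-c$ and $c' := d-b$, we have $0 < b' < c' < d$, and the hypothesis $n \le \lfloor(d-b-2)/(d-c)\rfloor$ becomes $n \le \lfloor(c'-2)/b'\rfloor$. Applying part (1) to $H'$ yields $nb'+1 \notin nH'$, whence $nc-1 \notin nH$.

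I do not anticipate a real obstacle here; once the canonical decomposition of $nH$ into a finite union of intervals is in hand, the rest is bookkeeping. The only delicate point is the tightness of the bound $n \le \lfloor(c-2)/b\rfloor$, which is precisely what forces $nb+1$ to remain strictly below $c$ and hence below every interval $I_k$ with $k<n$; in part (2) the same tightness is transported by the duality $\alpha \mapsto d-\alpha$.
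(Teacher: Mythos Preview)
Your proof is correct and follows essentially the same approach as the paper: for (1) the paper argues that if $nb+1\in nH$ then not all summands can be $\le b$, so one is $\ge c$, forcing $nb+1\ge c$, a contradiction --- this is exactly your case split between $k=n$ and $k\le n-1$, just without writing out the interval decomposition of $nH$. For (2) the paper likewise reduces to (1) via the substitution $0<d-c<d-b<d$, which is your involution $\alpha\mapsto d-\alpha$.
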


\begin{proof}
For $n \le \dfrac{c - 2}{b}$ we have $nb + 1 \le c-1$. If $nb+1 \in nH$, then $nb+1$ is a sum of $n$ elements of $H$. These elements can not be all $\le b$.
Hence, one of them is $> b$. Since $c$ is the least element $> b$ of $H$, this implies $nb+1 \ge c$, a contradiction. This proves (1). Applying (1) to the sequence $0 < d-c < d-b < d$, we obtain (2).
\end{proof}

We have the following regularity bounds for non-smooth curves of Type A.

\begin{thm} \label{regA}
Let $0=a_0 \le a_1 \le \cdots \le a_{2r+1}=d$ be a sequence of integers with $a_{2i-1} < a_{2i}-1$, $i = 1,...,r$, and $R = k\big[x^\a y^{d-\a}|\ \a \in \bigcup_{i=0}^r[a_{2i}, a_{2i+1}]\big]$. 
Assume that $a_1 = 0$, $a_{2r} < d$, and $2a_2 - 1\le a_3$, $r \ge 2$. 
Set $\ell = \max\{a_{2i} - a_{2i-1}-1|\ i = 2,...,r\}$ and $\e = \min\{a_3,d-a_{2r}\}$.  Then 
\begin{enumerate}[\indent \rm (1)]
\item $\reg(R) \le  \left\lfloor \dfrac{\ell -1}{\e}\right\rfloor+3$ if $a_3-a_2 < d-a_{2r}$.
\item $\reg(R) \le  \left\lfloor \dfrac{\ell -1}{\e}\right\rfloor+2$ if $a_3-a_2 \ge d-a_{2r}$.
\item $\reg(R) \ge \max\left\{\left\lfloor\dfrac{a_{4}-2}{a_3}\right\rfloor,\left\lfloor\dfrac{d-a_{2r-1}-2}{d-a_{2r}}\right\rfloor\right\}+1.$
\end{enumerate}
\end{thm}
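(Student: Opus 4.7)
The plan is to invoke the characterization established in the proof of Theorem~\ref{BmA}: the finite Macaulayfication is $R^*=k[N]$ with $G_N=\{0\}\cup[a_2,d]$ and $r_Q(R^*)=2$, so by Theorem~\ref{criterion}(2), $\reg(R)=\min\{n\ge 2:\ nG_M=nG_N\}$. The hypothesis $2a_2-1\le a_3<d$ gives $2a_2\le d+1$, from which a short induction shows $nG_N=\{0\}\cup[a_2,nd]$ for every $n\ge 1$. Since $G_M\subseteq G_N$, the task reduces to identifying the smallest $n\ge 2$ such that $[a_2,nd]\subseteq nG_M$.

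For the lower bound~(3), observe the coarse inclusions $G_M\subseteq[0,a_3]\cup[a_4,d]$ and $G_M\subseteq[0,a_{2r-1}]\cup[a_{2r},d]$, each obtained by merging intermediate intervals with neighboring ones. Applying Lemma~\ref{fraction3}(1) with $b=a_3$, $c=a_4$ to the first inclusion shows $na_3+1\notin n([0,a_3]\cup[a_4,d])\supseteq nG_M$ for every $n\le \lfloor(a_4-2)/a_3\rfloor$; since $na_3+1\in[a_2,nd]=nG_N\setminus\{0\}$, we obtain $nG_M\ne nG_N$ and hence $\reg(R)\ge\lfloor(a_4-2)/a_3\rfloor+1$. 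The symmetric application of Lemma~\ref{fraction3}(2) with $b=a_{2r-1}$, $c=a_{2r}$ to the second inclusion gives $\reg(R)\ge\lfloor(d-a_{2r-1}-2)/(d-a_{2r})\rfloor+1$, and taking the maximum yields~(3).

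For the upper bounds~(1) and~(2), put $n_0$ equal to the claimed bound and prove $[a_2,n_0 d]\subseteq n_0 G_M$. Split $[a_2,n_0 d]=[a_2,n_0 a_3]\cup[n_0 a_3+1,n_0 a_{2r}-1]\cup[n_0 a_{2r},n_0 d]$. Lemma~\ref{fraction1}(1) applied to $H_1=\{0\}\cup[a_2,a_3]$ places the first piece in $n_0 H_1\subseteq n_0 G_M$, and the last piece is automatically $n_0[a_{2r},d]$. For a candidate value $v$ in the middle piece, the strategy is to write $v=ja_{2r}+v'$ for a suitable $j\in\{1,\dots,n_0-1\}$, possibly also using one inner generator from some $[a_{2i},a_{2i+1}]$ with $2\le i\le r-1$, so that $v'$ is an $(n_0-j)$-fold sum handled by Lemma~\ref{fraction1}(1). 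Lemma~\ref{fraction1}(2), together with the dual statement obtained from Lemma~\ref{fraction2} applied to the upper interval, is then used to transport the coverage across each gap $[a_{2i-1}+1,a_{2i}-1]$ of length $\ell_i\le\ell$ via successive shifts of size $a_3$ from below or $d-a_{2r}$ from above; since each shift absorbs $\e$ units of the gap, roughly $\lfloor(\ell-1)/\e\rfloor$ shifts suffice for the widest gap.

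The principal obstacle is the case analysis in this last step: one must verify that every $v$ in the middle range admits an admissible $n_0$-part composition, and that the threshold separating $+3$ from $+2$ is correctly placed. The saving of one unit in Case~(2) should come from the hypothesis $a_3-a_2\ge d-a_{2r}$, which forces the lower shift interval $[a_2,a_3]$ to be at least as wide as the upper one $[a_{2r},d]$, so that the lower and upper coverings meet after a single base step; in Case~(1) the reverse asymmetry forces an additional base step to bridge the transition. Managing the boundary compositions where the last lower shift of size $a_3$ meets the first upper shift of size $d-a_{2r}$, and extracting the sharp integer bound $\lfloor(\ell-1)/\e\rfloor$ rather than a loose $\lceil\ell/\e\rceil$, will require careful bookkeeping based on the divisibility of $\ell$ by $\e$.
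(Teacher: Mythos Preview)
Your setup and the proof of~(3) are correct and match the paper's: the paper also reduces (via Theorem~\ref{BmA}) to $\reg(R)=\min\{n\ge 2:\ [na_3,na_{2r}]\subseteq nG_M\}$, which is equivalent to your formulation since the flanks $[a_2,na_3]$ and $[na_{2r},nd]$ are always covered, and then applies Lemma~\ref{fraction3} exactly as you do.

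For~(1) and~(2), however, your direct approach---fix $n_0$ equal to the claimed bound and construct an $n_0$-part decomposition of every $v$ in the middle piece---is not what the paper does, and the ``principal obstacle'' you name is real: your sketch does not specify how to choose $j$ and the inner generator uniformly in $v$, nor how the sharp floor $\lfloor(\ell-1)/\e\rfloor$ emerges. The paper avoids this case analysis entirely by arguing contrapositively. It sets $m=\reg(R)-1$, picks a witness $\nu\in[ma_3,ma_{2r}]\setminus mG_M$, and uses the identity $[ma_3,ma_{2r}]=[a_3,a_{2r}]+(m-1)\{a_3,a_{2r}\}$ to write $\nu=\alpha+pa_3+qa_{2r}$ with a \emph{single} base point $\alpha\in[a_3,a_{2r}]$ and $p+q=m-1$. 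Since $\nu\notin mG_M$, one gets $\alpha+pa_3\notin(p+1)G_M$ and $\alpha+qa_{2r}\notin(q+1)G_M$; and $\alpha$ lies in some gap $(a_{2i-1},a_{2i})$. Lemmas~\ref{fraction1}(2) and~\ref{fraction2}(2) are then applied in contrapositive form (with $c=a_{2i}$ and $b=a_{2i-1}$) to give
\[
p\le\Big\lfloor\frac{a_2+a_{2i}-\alpha-1}{\e}\Big\rfloor,\qquad
q\le\Big\lfloor\frac{\alpha-a_{2i-1}-1}{\e}\Big\rfloor,
\]
whence $m-1=p+q\le(\ell+a_2-1)/\e$. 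This single intermediate bound $\reg(R)\le\lfloor(\ell+a_2-1)/\e\rfloor+2$ yields~(1) at once (since $a_2\le\e$ in either subcase) and yields~(2) after a one-line sharpening of the $p$-bound under the extra hypothesis $a_3-a_2\ge d-a_{2r}$. The decomposition of $\nu$ into one base point plus multiples of only the two extremal generators $a_3,a_{2r}$ is exactly the device that replaces your anticipated case analysis.
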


\begin{proof}
Let $M = \{x^\a y^{d-\a}|\ \a \in \bigcup_{i=0}^r[a_{2i}, a_{2i+1}]\}$ and $N = \{x^{\a}y^{d-\a}| \ \a \in \{0\} \cup [a_2,d]\}$. By the proof of Theorem \ref{BmA}, 
$\reg(R) =  \min\{n \ge 2|\ nG_M = nG_N\}.$

We have $nG_N = \bigcup_{i=0}^ni[a_2,d]$ for $n \ge 1$.
From this it follows that $nG_N \subseteq \{0\} \cup [a_2,nd]$.
Since $d > a_3 \ge 2a_2-1$, $[a_2,nd] \subseteq nG_N$ by Lemma \ref{fraction1}(1).
Hence $nG_N =  \{0\} \cup [a_2,nd]$. We also have
$[a_2,na_3] \subseteq nG_M$ by Lemma \ref{fraction1}(1) and $[na_{2r},nd] \subseteq nG_M$ by Lemma  \ref{fraction2}(1) (case $e=d$). 
Therefore, 
$$nG_N \setminus nG_M \subseteq [a_2,nd] \setminus ([a_2,na_3] \cup [na_{2r},nd]) \subseteq [na_3,na_{2r}].$$
Hence,  $nG_N = nG_M$ if and only if $[na_3,na_{2r}] \subseteq nG_M$. This implies
\begin{equation}
\reg(R) = \min\{n|\ [na_3,na_{2r}] \subseteq nG_M\}.
\end{equation}

Let $m = \reg(R)-1$. Then there exists $\nu \in [ma_3,ma_{2r}]$ such that $\nu \not\in mG_M$.
We have 
$$[2a_3,2a_{2r}] = [2a_3,a_3+a_{2r}] \cup [a_3+a_{2r},2a_{2r}] = [a_3,a_{2r}] + \{a_3,a_{2r}\}.$$
Using induction we can show that
$[na_3,na_{2r}] = [a_3,a_{2r}] + (n-1)\{a_3,a_{2r}\}$ for $n \ge 2$.
Therefore, there exists $\a \in  [a_3,a_{2r}]$ such that 
$\nu = \a + pa_3 + qa_{2r}$ for some $p,q \ge 0$, $p + q = m-1$.
Since $\nu \not\in mG_M$, $\a \not\in G_M$. 
The condition $\nu \not\in mG_M$ also implies $\a + pa_3 \not\in (p+1)G_M$ and $\a +qa_{2r} \not\in (q+1)G_M$.

Since $\a \not\in G_M$, there exists $i$ such that $a_{2i-1} < \a < a_{2i}$, $1 < i \le r$.
Let $H = \{0\} \cup [a_2,a_3]$. Since $H \subseteq G_M$ and $a_{2i} \in G_M$, $\a + pa_3 \not\in pH + a_{2i}$.
By Lemma \ref{fraction1}(2), this implies 
$$p \le \left\lfloor \dfrac{a_2 + a_{2i} - \a - 1}{a_3}\right\rfloor \le \left\lfloor \dfrac{a_2 + a_{2i} - \a - 1}{\e}\right\rfloor.$$
Using Lemma \ref{fraction2}(2) (case $e = d$), we also have 
$$q \le \left\lfloor \dfrac{\a - a_{2i-1} - 1}{d-a_{2r}}\right\rfloor \le \left\lfloor \dfrac{\a - a_{2i-1} - 1}{\e}\right\rfloor .$$
From this it follows that
$$
p+q \le \dfrac{a_2 + a_{2i} - \a - 1}{\e} + \dfrac{\a-a_{2i-1} - 1}{\e}
= \dfrac{a_{2i} - a_{2i-1} + a_2 - 2}{\e} \le \dfrac{\ell + a_2 - 1}{\e}.
$$
Therefore,
$\reg(R) = p+q+2 \le \left\lfloor \dfrac{\ell + a_2 - 1}{\e}\right\rfloor + 2.$

This bound implies (1). Indeed, if $\e = a_3$, we have
$$\reg(R) \le \left\lfloor \dfrac{\ell + a_2 - 1}{a_3}\right\rfloor + 2 \le \left\lfloor \dfrac{\ell - 1}{a_3}\right\rfloor + 3$$
because $a_2 \le a_3$. 
If $\e = d-a_{2r}$ and $a_3-a_2 < d-a_{2r}$, we have $a_2 \le a_3-a_2 + 1 \le d-a_{2r}$. Hence
$$\reg(R) \le \left\lfloor \dfrac{\ell + a_2 - 1}{d-a_{2r}}\right\rfloor + 2 \le \left\lfloor \dfrac{\ell - 1}{d-a_{2r}}\right\rfloor + 3.$$
\par

If $a_3-a_2 \ge d-a_{2r}$, then $\e = d-a_{2r}$ and $a_{2i} - \a-1 < n\e \le n(a_3-a_2)$ for $n \ge \left\lfloor \dfrac{a_{2i} - \a - 1}{\e}\right\rfloor+1$. From this it follows that $na_2 \le \a + na_3 - a_{2i} \le na_3$. Hence $\a + na_3 - a_{2i} \in [na_2,na_3] \subseteq nG_M$,
which implies $\a + na_3 \in nG_M + a_{2i} \subseteq (n+1)G_M$. Therefore, $p \le  \left\lfloor \dfrac{a_{2i} - \a - 1}{\e}\right\rfloor$. Now we have
$$
p+q \le \dfrac{a_{2i} - \a - 1}{\e} + \dfrac{\a - a_{2i-1} - 1}{\e} 
= \dfrac{a_{2i} - a_{2i-1} - 2}{\e} \le \dfrac{\ell -1}{\e}.
$$
Therefore,
$\reg(R) = p+q+ 2 \le \left\lfloor \dfrac{\ell - 1}{\e}\right\rfloor +2$, which proves (2).
\par

Now we are going to prove (3). By Lemma \ref{fraction3}(1), $na_3+1 \not\in nG_M$ for $n \le \left\lfloor \dfrac{a_4 -2}{a_3}\right\rfloor$.
From this it follows that $[na_3,na_{2r}] \not\subseteq nG_M$. Hence
$\reg(R) \ge \left\lfloor \dfrac{a_4 -2}{a_3}\right\rfloor+1$ by (3.1). 
By Lemma \ref{fraction3}(2), $na_{2r}-1 \not\in nG_M$ for $n \le \left\lfloor \dfrac{d - a_{2r-1} -2}{d-a_{2r}}\right\rfloor$. From this it follows that $[na_3,na_{2r}] \not\subseteq nG_M$. Hence 
$\reg(R) \ge \left\lfloor \dfrac{d - a_{2r-1} -2}{d-a_{2r}}\right\rfloor+1$, as required.
\end{proof}

\begin{rem} \label{remA}
The bound of Theorem \ref{regA}(1) is sharp. Actually, we have 
$$\reg(R) \le \left\lfloor \dfrac{\ell + a_2 - 1}{\e}\right\rfloor + 2$$
by the proof of Theorem \ref{regA}(1). This bound is attained
if $\ell = a_4-a_3-1$, $\e = a_3$ and $a_5 < a_4+a_2-1 < a_6$ for $r \ge 3$.
Indeed, we have $(n-1)a_3 \le a_4 - a_3 +a_2 - 2$ for $n \le \left\lfloor \dfrac{\ell + a_2 -1}{\e}\right\rfloor+1$. From this it follows that $na_3 + 1 \le a_4+a_2-1$. Let $\a := a_4+a_2-1$.
If $\a \in nG_M$, then $\a$ is a sum of $n$ elements of $G_M$. 
These elements can not be all $\le a_3$. Hence one of them is $\ge a_4$. 
The remaining elements have to be zero because all non-zero elements of $G_M$ are $\ge a_2$. Therefore, $\a \in G_M$. Since $\a \in [a_5+1,a_6-1]$, which does not contain any element of $G_M$, this gives a contradiction. So we have $\a \not\in nG_M$. 
Since $na_3 < \a < na_6 \le na_{2r}$, $[na_3,na_{2r}] \not\subseteq nG_M$. 
By (3.1), this implies $\reg(R) \ge \left\lfloor \dfrac{\ell + a_2 -1}{\e}\right\rfloor+2$. Using the above conditions one can construct examples where
$$\reg(R) = \left\lfloor \dfrac{\ell +a_2-1}{\e}\right\rfloor+2 = \left\lfloor \dfrac{\ell -1}{\e}\right\rfloor+3.$$
\end{rem}

\begin{ex} 
Let $R = k[x^d,x^{d-1}y,x^{d-2}y^2, x^{d-3}y^3, x^{d-5}y^5, x^3y^{d-3}, x^2y^{d-2},y^d]$, $d \ge 10$. Then $R$ belongs to Type $A$ with $r = 3$, $a_1 = 0$, $a_2 = 2$, $a_3 = 3$, $a_4 = a_5 = d-5$, $a_6 = d-3$, $a_7 = d$, which satisfy the condition for the attainment of the regularity bound of Remark \ref{remA}. In this case, $\ell = d-9$ and $\e = 3$.
Therefore, $\reg(R) = \left\lfloor \dfrac{d-8}{3}\right\rfloor+2$. If $d-10$ is not divisible by 3, $\left\lfloor \dfrac{d-8}{3}\right\rfloor+2 = \left\lfloor \dfrac{d-10}{3}\right\rfloor+3.$
\end{ex}

If $r = 2$, we have the following explicit regularity formula.

\begin{thm}  \label{regA1}
Let $1 < a <  b < c  < d$ be a sequence of integers with $b < c-1$ and $R = k\big[x^\a y^{d-\a}|\ \a \in \{0\} \cup [a,b] \cup [c,d]\big]$. Assume that $2a - 1\le b$. Let $\e = \min\{b,d-c\}$.
Then $\reg(R) =  \left \lfloor \dfrac{c-b-2}{\e}\right \rfloor +2.$
\end{thm}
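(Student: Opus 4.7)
The plan is to derive the lower bound from Theorem \ref{regA}(3) specialized to $r = 2$ and to prove the matching upper bound by a direct covering argument that exploits the simple shape of $G_M = \{0\} \cup [a, b] \cup [c, d]$. The proof of Theorem \ref{BmA} already shows that $R^* = k[N]$ with $G_N = \{0\} \cup [a, d]$ and $r_Q(R^*) = 2$; combined with Theorem \ref{criterion}(2) and $nG_N = \{0\} \cup [a, nd]$ (via Lemma \ref{fraction1}(1)), the regularity of $R$ equals the smallest $n \ge 2$ for which $nG_M = nG_N$.

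For the lower bound, Theorem \ref{regA}(3) yields
$$\reg(R) \ge \max\Bigl\{\Bigl\lfloor \tfrac{c - 2}{b} \Bigr\rfloor,\ \Bigl\lfloor \tfrac{d - b - 2}{d - c} \Bigr\rfloor\Bigr\} + 1.$$
The identities $\lfloor (c-2)/b \rfloor = \lfloor (c-b-2)/b \rfloor + 1$ and $\lfloor (d-b-2)/(d-c) \rfloor = \lfloor (c-b-2)/(d-c) \rfloor + 1$ rewrite each summand as $\lfloor (c-b-2)/\e \rfloor + 2$ for the appropriate $\e$, and the maximum then equals $\lfloor (c-b-2)/\e \rfloor + 2$.

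For the upper bound, set $n = \lfloor (c-b-2)/\e \rfloor + 2$. I classify the generators of $G_M$ as zero, small (in $[a, b]$), or big (in $[c, d]$); using $k$ small and $k'$ big summands (with $n - k - k'$ zeros) achieves every integer in $[ka + k'c,\ kb + k'd]$, so
$$nG_M \supseteq \{0\} \cup \bigcup_{\substack{k, k' \ge 0 \\ k + k' \le n \\ (k,k') \ne (0,0)}} [ka + k'c,\ kb + k'd].$$
For each fixed $k' \ge 1$, the union of these intervals over $k = 0, \ldots, n - k'$ is $[k'c, k'd + (n - k')b]$: consecutive pieces abut because the required shift reduces to $d - c \ge a - 1$, which holds in both sub-cases $\e = b$ (where $d - c \ge b \ge a$) and $\e = d - c$ (where $d - c > b - a \ge a - 1$, using $2a - 1 \le b$). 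For $k' = 0$, Lemma \ref{fraction1}(1) gives the block $\{0\} \cup [a, nb]$. Chaining the blocks $k' = 0, 1, \ldots, n$ produces $\{0\} \cup [a, nd] = nG_N$ provided
$$c - 1 \le k'(d - c) + (n - k')b \qquad \text{for all } k' \in \{0, 1, \ldots, n - 1\}.$$

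The main obstacle is verifying this chain condition. Because its right-hand side is linear in $k'$, it suffices to check the endpoint cases $k' = 0$ and $k' = n - 1$, i.e.\ the two inequalities $nb \ge c - 1$ and $(n - 1)(d - c) \ge c - b - 1$. Writing $c - b - 2 = q\e + r$ with $0 \le r \le \e - 1$, so that $n = q + 2$, each inequality then follows by a short direct computation in each of the two cases for $\e$. For instance, when $\e = d - c$ one gets $(n - 1)(d - c) = d - b - 2 - r$, which is $\ge c - b - 1$ precisely because $r \le d - c - 1$, and then $nb \ge b + (c - b - 1) \cdot b/(d - c) \ge c - 1$ since $b/(d - c) \ge 1$; when $\e = b$ one gets $nb = c + b - r - 2 \ge c - 1$ from $r \le b - 1$, and $(n - 1)(d - c) \ge (n - 1)b = c - r - 2 \ge c - b - 1$ likewise. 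All four verifications ultimately reduce to the same remainder bound $r \le \e - 1$, after which the covering argument yields $nG_M = nG_N$ and hence $\reg(R) \le n$, matching the lower bound.
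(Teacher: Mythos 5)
Your lower bound is exactly the paper's (Theorem \ref{regA}(3) plus the floor identities), and your reduction of the upper bound to showing $nG_M = nG_N$ with $nG_N = \{0\}\cup[a,nd]$ is sound. The genuine gap is in the covering argument: you claim that for each fixed $k'\ge 1$ the intervals $[ka+k'c,\ kb+k'd]$, $k=0,\dots,n-k'$, merge into the single interval $[k'c,\ k'd+(n-k')b]$. The only delicate transition there is $k=0\to k=1$, which requires $a-1\le k'(d-c)$, and your justification that this always holds is wrong in the sub-case $\e=d-c$: you assert $d-c>b-a$ in that sub-case, but this does not follow from the hypotheses ($\e=d-c$ only gives $d-c\le b$). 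Concretely, take $a=5$, $b=9$, $c=11$, $d=14$: all hypotheses hold ($2a-1=9\le b$, $b<c-1$), $\e=3$, $n=2$, yet the $k'=1$ block consists of $[11,14]$ and $[16,23]$, which do not abut --- the point $15$ is missed by that block. (It is covered by the $k'=0$ interval $[10,18]$, but your argument chains whole blocks and never invokes such cross-coverage, so as written the proof of $nG_M\supseteq[a,nd]$ fails.)

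The paper sidesteps exactly this trap by first disposing of the case $b-a\ge d-c$ via Theorem \ref{regA}(2) (there $\ell=c-b-1$ and $\e=d-c$, which gives the desired bound immediately), and only then arguing under the extra assumption $b-a<d-c$, which together with $2a-1\le b$ yields $a-1\le b-a<d-c$ --- precisely the inequality your chaining needs. Moreover the paper does not chain explicit interval blocks at all: it takes a witness $\nu=\a+pb+qc\notin mG_M$ with $m=\reg(R)-1$ and bounds $p$ and $q$ separately, using $[c,c+b]\subseteq 2G_M$ and Lemma \ref{fraction2}(2). To repair your proof you would either have to add the same case split (after which $a-1\le d-c$ holds and your $k'$-blocks do merge; your endpoint verifications of the chain condition via the remainder $r\le\e-1$ are fine), or supply an extra argument showing that each gap $[k'd+1,\ k'c+a-1]$ is absorbed by intervals with smaller $k'$.
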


\begin{proof}
By Theorem \ref{regA}(3), 
\begin{align*}
\reg(R) & \ge \max\left\{\left\lfloor\dfrac{c-2}{b}\right\rfloor,\left\lfloor\dfrac{d-b-2}{d-c}\right\rfloor\right\}+1\\
& = \max\left\{\left\lfloor\dfrac{c-b-2}{b}\right\rfloor,\left\lfloor\dfrac{c-b-2}{d-c}\right\rfloor\right\}+2
= \left \lfloor \dfrac{c-b-2}{\e} \right \rfloor + 2.
\end{align*}
It remains to show that $\reg(R) \le \left \lfloor \dfrac{c-b-2}{\e} \right \rfloor + 2$.
By Theorem \ref{regA}(2), this bound holds if $b-a \ge d-c$.
Thus, we may assume that $b-a < d-c$.
By the proof of Theorem \ref{regA}, we have 
$$\reg(R) = \min\{n|\ [nb,nc] \subseteq nG_M\},$$
where $G_M = \{0\} \cup [a,b] \cup [c,d]$.

Let $m = \reg(R)-1$ and $\nu \in [mb,mc]$ such that $\nu \not\in mG_M$. Since $[mb,mc] = [b,c] + (m-1)\{b,c\}$, there exists $\a \in  [b,c]$ such that 
$\nu = \a + pb + qc$ for some $p,q \ge 0$, $p + q = m-1$.
The condition $\nu \not\in mG_M$ implies $\a + pb \not\in (p+1)G_M$ and $\a + qc \not\in (q+1)G_M$. 

Let $t = \left \lfloor \dfrac{c-\a-1}{b} \right \rfloor+1$. Then  $c-\a \le  tb  \le c-\a +b -1$. Hence $\a + tb \in [c,c+b]$. Since $2a-1 \le b$ and $b-a < d-c$, we have $c + a - 1 \le c+b-a < d$. This implies $[c,c+b] = [c,d] \cup [c+a,c+b] \subseteq 2G_M \subseteq (t+1)G_M$. Therefore, $\a + tb \in (t+1)G_M$. 
From this it follows that $\a + nb \in (n+1)G_M$ for $n \ge t$. Hence
$$p \le  \left \lfloor \dfrac{c-\a-1}{b} \right \rfloor \le \left \lfloor \dfrac{c-\a-1}{\e} \right \rfloor.$$
By Lemma \ref{fraction2}(2) (case $e=d$), $\a + nc \in (n+1)G_M$ for 
$n \ge \left\lfloor \dfrac{\a-b-1}{d-c} \right\rfloor+1$. This implies 
$$q \le \left\lfloor \dfrac{\a-b-1}{d-c} \right\rfloor   \le \left \lfloor \dfrac{\a-b-1}{\e}\right \rfloor.$$ 
Therefore,
$$p + q \le \dfrac{c-\a-1}{\e}  + \dfrac{\a-b-1}{\e} = \dfrac{c-b-2}{\e}.$$
Since $\reg(R) = p+q+2$, we obtain $\reg(R) \le \left\lfloor \dfrac{c-b-2}{\e} \right\rfloor+2$, as required.
\end{proof}

\begin{ex} 
Let $R = k[x^d,x^{d-1}y,x^3y^{d-3}, x^2y^{d-2},y^d]$, $d \ge 6$. Then $R$ satisfies the assumption of Theorem \ref{regA1} with $r = 2$, $a = 2$, $b = 3$, $c = d-1$. Therefore, $\reg(R) = d-4$. 
\end{ex}

In the following, we give regularity bounds for non-smooth curves of Type B. 

\begin{thm} \label{regB}
Let $0=a_0 \le a_1 \le \cdots \le a_{2r+1}=d$ be a sequence of integers with $a_{2i-1} < a_{2i}-1$, $i = 1,...,r$, and $R = k\big[x^\a y^{d-\a}|\ \a \in \bigcup_{i=0}^r[a_{2i}, a_{2i+1}]\big]$. 
Assume  that $a_1 = 0$, $a_{2r} = d$, $2a_2 - 1\le a_3$,  and $a_{2r-2} +d-1 \le 2a_{2r-1}$, $r \ge 3$. 
Set $\ell = \max\{a_{2i} - a_{2i-1}-1|\ i = 2,...,r-1\}$ and $\e = \min\{a_3,d-a_{2r-2}\}$. 
Then
\begin{enumerate}[\indent \rm (1)]
\item $\reg(R) \le \left\lfloor \dfrac{\ell+a_2 + d-a_{2r-1}-1}{\e}\right\rfloor+2$.
\item $\reg(R) \le \left\lfloor \dfrac{\ell+ d-a_{2r-1}-1}{\e}\right\rfloor+2$ if $a_3 -a_2 \ge d-a_{2r-2}$.
\item $\reg(R) \le \left\lfloor \dfrac{\ell+a_2-1}{\e}\right\rfloor+2$ if $a_{2r-1}-a_{2r-2} \ge a_3$. 
\item $\reg(R) \ge \max\left\{\left\lfloor\dfrac{a_{4}-2}{a_3}\right\rfloor,\left\lfloor\dfrac{d-a_{2r-3}-2}{d-a_{2r-2}}\right\rfloor\right\}+1.$
\end{enumerate}
\end{thm}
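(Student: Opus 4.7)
The plan is to follow the template established in the proof of Theorem \ref{regA}, now exploiting the finite Macaulayfication $R^* = k[N]$ identified in the proof of Theorem \ref{BmB}, where $N = \{x^\a y^{d-\a}|\ \a \in \{0,d\} \cup [a_2,a_{2r-1}]\}$ and $r_Q(R^*) = 2$. By Theorem \ref{criterion}(2) this already gives
$$\reg(R) = \min\{n \ge 2|\ nG_M = nG_N\}.$$
The first technical step is to describe $nG_N$ explicitly. Using $2a_2-1 \le a_3 \le a_{2r-1}$ one shows by induction that $j[a_2,a_{2r-1}] = [ja_2,ja_{2r-1}]$ is a genuine interval, and then combining these with shifts by multiples of $d$ and invoking $a_{2r-2}+d-1 \le 2a_{2r-1}$ to make consecutive shifted intervals overlap, one concludes
$$nG_N = \{0,nd\} \cup [a_2,\ a_{2r-1}+(n-1)d]\quad(n \ge 2).$$

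Two inclusions then trim $nG_N \setminus nG_M$ to a small window. Lemma \ref{fraction1}(1) applied to $\{0\}\cup[a_2,a_3] \subseteq G_M$ gives $[a_2,na_3] \subseteq nG_M$, and Lemma \ref{fraction2}(1) applied to $[a_{2r-2},a_{2r-1}]\cup\{d\} \subseteq G_M$ gives $[na_{2r-2},\ a_{2r-1}+(n-1)d] \subseteq nG_M$. Combined with the description of $nG_N$ this yields
$$\reg(R) = \min\{n \ge 2|\ [na_3,na_{2r-2}] \subseteq nG_M\}.$$
Since $[na_3,na_{2r-2}] = [a_3,a_{2r-2}]+(n-1)\{a_3,a_{2r-2}\}$, setting $m = \reg(R)-1$, every element of $[ma_3,ma_{2r-2}] \setminus mG_M$ has a representation $\nu = \a + pa_3 + qa_{2r-2}$ with $p+q = m-1$ and $\a \in [a_3,a_{2r-2}] \setminus G_M$. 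Thus $\a$ lies in some gap $(a_{2i-1},a_{2i})$ with $2 \le i \le r-1$, and both $\a+pa_3 \not\in (p+1)G_M$ and $\a+qa_{2r-2} \not\in (q+1)G_M$.

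The general bounds on $p$ and $q$ then follow from Lemma \ref{fraction1}(2) with $c = a_{2i}$ and Lemma \ref{fraction2}(2) with $b = a_{2i-1}$, producing
$$p \le \left\lfloor \tfrac{a_2+a_{2i}-\a-1}{\e}\right\rfloor, \qquad q \le \left\lfloor \tfrac{d-a_{2r-1}+\a-a_{2i-1}-1}{\e}\right\rfloor;$$
summing and using $a_{2i}-a_{2i-1}-1 \le \ell$ gives (1) via $\reg(R)=p+q+2$. For (2), when $a_3-a_2 \ge \e = d-a_{2r-2}$, the same trick as in Theorem \ref{regA}(2) (writing $\a+na_3-a_{2i} \in [na_2,na_3] \subseteq nG_M$) sharpens the bound on $p$ to $\lfloor(a_{2i}-\a-1)/\e\rfloor$, killing the $a_2$ in the numerator; for (3), the symmetric argument when $a_{2r-1}-a_{2r-2} \ge \e = a_3$ sharpens $q$ to $\lfloor(\a-a_{2i-1}-1)/\e\rfloor$, killing the $d-a_{2r-1}$ term. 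Finally, (4) comes from Lemma \ref{fraction3}-style counting applied to $G_M$: if $na_3+1 \in nG_M$, not all summands can be $\le a_3$, so at least one is $\ge a_4$, forcing $na_3+1 \ge a_4$; symmetrically, $na_{2r-2}-1 \in nG_M$ forces some summand $\le a_{2r-3}$, giving $na_{2r-2}-1 \le a_{2r-3}+(n-1)d$. The contrapositives produce the two lower bounds. The main obstacle I expect is the bookkeeping for the formula for $nG_N$—in particular verifying that the doubly-indexed family $\{[ja_2+id,ja_{2r-1}+id]\}$ really glues into the single interval $[a_2,a_{2r-1}+(n-1)d]$ using both structural hypotheses—after which the $p,q$ estimates are a routine parallel of Theorem \ref{regA}.
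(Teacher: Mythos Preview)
Your proposal is correct and follows essentially the same route as the paper's proof: identify $R^*=k[N]$ from Theorem~\ref{BmB}, reduce to $\reg(R)=\min\{n\ge 2\mid [na_3,na_{2r-2}]\subseteq nG_M\}$, decompose a missing element as $\alpha+pa_3+qa_{2r-2}$, bound $p$ and $q$ via Lemmas~\ref{fraction1}(2) and~\ref{fraction2}(2), and handle (2)--(3) by the sharpening trick and symmetry. The only cosmetic differences are that the paper computes $nG_N$ by directly citing Lemmas~\ref{fraction1}(1) and~\ref{fraction2}(1) (applied with $a=a_2$, $b=a_{2r-1}$ and with $c=a_2$, $e=a_{2r-1}$ respectively) rather than your interval-gluing sketch, derives the $q$-bound by the $x\leftrightarrow y$ symmetry rather than a second lemma citation, and for (4) cites Lemma~\ref{fraction3} rather than reproving it inline.
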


\begin{proof}
Let $M = \{x^\a y^{d-\a}|\ \a \in \bigcup_{i=0}^r[a_{2i}, a_{2i+1}]\}$ and $N = \{x^{\a}y^{d-\a}| \ \a \in \{0,d\} \cup [a_2,a_{2r-1}]\}$. By the proof of Theorem \ref{BmB}, 
$\reg(R) =  \min\{n \ge 2|\ nG_M = nG_N\}.$

We have $nG_N = \bigcup_{i+j \le n}[ia_2+jd,ia_{2r-1}+jd]$ for $n \ge 1$.
From this it follows that $nG_N \subseteq \{0,nd\} \cup [a_2,a_{2r-1}+(n-1)d]$.
Since $a_{2r-1} > a_3 \ge  2a_2-1$, $[a_2,na_{2r-1}] \subseteq nG_N$ by Lemma \ref{fraction1}(1).
Since $a_2+ d-1 < a_{2r-2} +d-1 \le 2a_{2r-1}$, $[na_2,a_{2r-1}+(n-1)d] \subseteq nG_N$ by Lemma \ref{fraction2}(1). Hence 
$$nG_N \supseteq [a_2,na_{2r-1}] \cup [na_2,a_{2r-1}+(n-1)d] =  [a_2,a_{2r-1}+(n-1)d].$$
Therefore, $nG_N = \{0,nd\} \cup [a_2,a_{2r-1}+(n-1)d].$

By Lemma \ref{fraction1}(1), $[a_2,na_3] \subseteq nG_M$. 
By Lemma \ref{fraction2}(1), $[na_{2r-2},a_{2r-1}+(n-1)d] \subseteq nG_M$. 
Therefore, 
\begin{align*}
nG_N \setminus nG_M & \subseteq   [a_2,a_{2r-1}+(n-1)d] \setminus ([a_2,na_3] \cup [na_{2r-2},a_{2r-1}+(n-1)d])\\
& \subseteq [na_3,na_{2r-2}].
\end{align*}
From this it follows that $nG_N = nG_M$ if and only if $[na_3,na_{2r-2}] \subseteq nG_M$. So we have
\begin{equation}
\reg(R) = \min\{n \ge 2|\ [na_3,na_{2r-2}] \subseteq nG_M\}.
\end{equation}

Now we can proceed as in the proof of Theorem \ref{regA}.
Let $m = \reg(R)-1$ and $\nu \in [ma_3,ma_{2r-2}]$ such that $\nu \not\in mG_M$.
Then there exists $\a \in  [a_3,a_{2r-2}]$ such that 
$\nu = \a + pa_3 + qa_{2r-2}$ for some $p,q \ge 0$, $p + q = m-1$. 
Since $\nu \not\in mG_M$, $\a \not\in G_M$.  
Hence $a_{2i-1} < \a < a_{2i}$ for some $i$, $1 < i < r$.
By the proof of Theorem \ref{regA}(1), we have
$$p \le \left\lfloor \dfrac{a_2 + a_{2i} - \a - 1}{\e}\right\rfloor.$$
Since $d-a_{2i} < d- \a < d-a_{2i-1}$, we can replace $p,a_2,a_{2i}, \a$ by $q,d-a_{2r-1},d-a_{2i-1},d-\a$ to obtain
\begin{equation*}
q  \le \left\lfloor \dfrac{\a - a_{2i-1} + d-a_{2r-1} - 1}{\e}\right\rfloor.
\end{equation*}
From this it follows that
\begin{align*}
p+q & \le \dfrac{a_2 + a_{2i} - \a - 1}{\e} + \dfrac{\a-a_{2i-1} + d-a_{2r-1} - 1}{\e}\\
& = \dfrac{a_{2i} - a_{2i-1} + a_2 + d-a_{2r-1}- 2}{\e} \le \dfrac{\ell + a_2 + d-a_{2r-1} - 1}{\e}.
\end{align*}
Since $\reg(R) = p+q+2$, we obtain
$\reg(R) \le \left\lfloor \dfrac{\ell + a_2 + d-a_{2r-1} - 1}{\e}\right\rfloor + 2,$
which proves (1).
\par

If $a_3-a_2 \ge d-a_{2r-2}$, we can show similarly as in the proof for Theorem \ref{regA}(2) that 
$p \le  \left\lfloor \dfrac{a_{2i} - \a - 1}{\e}\right\rfloor$. Therefore,
\begin{align*}
p+q & \le \dfrac{a_{2i} - \a - 1}{\e} + \dfrac{\a - a_{2i-1} + d-a_{2r-1} - 1}{\e}\\
& = \dfrac{a_{2i} - a_{2i-1} + d-a_{2r-1} - 2}{\e} \le \dfrac{\ell + d-a_{2r-1}-1}{\e}.
\end{align*}
This implies
$\reg(R) \le \left\lfloor \dfrac{\ell + d-a_{2r-1} - 1}{\e}\right\rfloor +2$, which proves (2).

Note that the assumption of Theorem \ref{regB} is symmetric if we replace the numbers $a_i$ by $d-a_{2r+1-i}$, $i = 0,1,...,2r+1$. 
Since $a_{2r-1}-a_{2r-2} \ge a_3$ is the symmetric condition of $a_3-a_2 \ge d-a_{2r-2}$, we also have (3), 
which is the symmetric statement of (2).\par

To prove (4) we observe that $na_3+1 \not\in nG_M$ for $n \le \left\lfloor \dfrac{a_4 -2}{a_3}\right\rfloor$ by Lemma \ref{fraction3}(1). Since $na_3+1 \in [na_3,na_{2r-2}]$,  $[na_3,na_{2r-2}] \not\subseteq nG_M$. By (3.2), this implies 
$\reg(R) \ge \left\lfloor \dfrac{a_4 -2}{a_3}\right\rfloor+1$. By the duality of the variables $x,y$, we can replace $a_3,a_4$ by $d-a_{2r-2}, d - a_{2r-3}$ to obtain
$\reg(R) \ge \left\lfloor \dfrac{d - a_{2r-3} -2}{d-a_{2r-2}}\right\rfloor+1$. The proof of Theorem \ref{regB} is now complete.
\end{proof}

If $r=3$, we have better regularity estimates than Theorem \ref{regB}. 
						
\begin{thm}  \label{regB1}
Let $1 < a < b < c < e < d-1$ be a sequence of integers with  $b < c-1$ and
$R = k\big[x^\a y^{d-\a}|\ \a \in \{0,d\} \cup [a,b] \cup [c,e] \big]$. 
Assume that $2a -1 \le b$ and $c+d-1 \le 2e$.
Let $\e = \min\{b,d-c\}$. Then
\begin{enumerate}[\indent \rm (1)]
\item $\left \lfloor \dfrac{c-b-2}{\e}\right \rfloor + 2 \le \reg(R) \le \left \lfloor \dfrac{c-b-2}{\e}\right \rfloor + 3$.
\item $\reg(R) = \left \lfloor \dfrac{c-b-2}{\e}\right \rfloor + 2$ if one of the following conditions is satisfied:\par
{\rm (i)} $a-1 \le e-c$ and $d-e-1 \le b-a$, \par
{\rm (ii)}  $b-a \ge d-c$,\par
{\rm (iii)}  $e-c \ge b$.
\end{enumerate}
\end{thm}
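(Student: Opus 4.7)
I follow the framework of Theorem~\ref{regB}: since $R^* = k[N]$ with $G_N = \{0,d\}\cup[a,e]$, one has $\reg(R) = \min\{n \ge 2 \mid [nb,nc]\subseteq nG_M\}$. Setting $m = \reg(R) - 1$, any $\nu \in [mb,mc] \setminus mG_M$ is of the form $\nu = \alpha + pb + qc$ with integer $b < \alpha < c$ and $p + q = m - 1$, and necessarily $\alpha + pb \notin (p+1)G_M$ and $\alpha + qc \notin (q+1)G_M$. The task reduces to bounding $p$ and $q$ from above. The lower bound in part~(1) is immediate from Theorem~\ref{regB}(4): using $\lfloor(c-2)/b\rfloor = \lfloor(c-b-2)/b\rfloor + 1$ and $\lfloor(d-b-2)/(d-c)\rfloor = \lfloor(c-b-2)/(d-c)\rfloor + 1$, the maximum there equals $\lfloor(c-b-2)/\e\rfloor + 1$, so $\reg(R) \ge \lfloor(c-b-2)/\e\rfloor + 2$.

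For the upper bounds I combine several estimates on the pair $(p,q)$. From Lemmas~\ref{fraction1}(2) and \ref{fraction2}(2) one has unconditionally $p \le \lfloor(a+c-\alpha-1)/b\rfloor$ and $q \le \lfloor(\alpha-b+d-e-1)/(d-c)\rfloor$. The \emph{Method~1} argument of Theorem~\ref{regA1} gives the sharper $p \le \lfloor(c-\alpha-1)/b\rfloor$ whenever $a-1 \le e-c$, since then $[c,c+b-1] = [c,e] \cup [c+a,c+b-1] \subseteq 2G_M$; its $x \leftrightarrow y$ dual gives $q \le \lfloor(\alpha-b-1)/(d-c)\rfloor$ when $d-e-1 \le b-a$. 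In addition, a \emph{one-$c$ decomposition} $\alpha + pb = c + \sum_{i=1}^p x_i$ with $x_i \in [a,b]$ is feasible as soon as $p(b-a) \ge c - \alpha$, giving the unconditional $p \le \lfloor(c-\alpha-1)/(b-a)\rfloor$; and an \emph{$r$-copies-of-$d$ decomposition} $\alpha + qc = (q-r)c + rd + (\alpha - r(d-c))$ with $r = \lceil(\alpha-b)/(d-c)\rceil$ has residue in $[a,b]$ exactly when $b-a \ge d-c-1$, yielding $q \le \lfloor(\alpha-b-1)/(d-c)\rfloor$ in that situation. In case (2)(i) Method~1 and its dual both apply; in case (2)(ii), where $\e = d-c$ and $b-a \ge d-c$, the one-$c$ and $r$-copies-of-$d$ bounds both apply, with $b-a \ge \e$ used to pass to denominator $\e$ in the one-$c$ bound. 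In each case, summing via $\lfloor x/\e\rfloor + \lfloor y/\e\rfloor \le \lfloor(x+y)/\e\rfloor$ yields $p + q \le \lfloor(c-b-2)/\e\rfloor$, hence $\reg(R) \le \lfloor(c-b-2)/\e\rfloor + 2$. Case (2)(iii) is the $x \leftrightarrow y$ dual of (2)(ii).

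The key observation for part~(1) is that the type~B hypotheses $2a-1 \le b$ and $c+d-1 \le 2e$ force at least one of (A) $a-1 \le e-c$ or (B) $d-e-1 \le b-a$ to hold: otherwise $a \ge e-c+2$ and $d-e \ge b-a+2$ sum to $2a+c+d \ge b+2e+4$, which with $c+d \le 2e+1$ forces $b \le 2a-3$, contradicting $2a-1 \le b$. If none of (i), (ii), (iii) holds, then by $x \leftrightarrow y$ symmetry I may assume (A) holds, (B) fails, and (iii) fails. Method~1 for $p$ combined with Lemma~\ref{fraction2}(2) for $q$ then gives $p + q \le \lfloor(c-b-2+d-e)/\e\rfloor$. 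But failure of (iii) means $e - c \le b - 1$, so $d - e \le e - c + 1 \le b$ by $c+d-1 \le 2e$; since always $d - e \le d - c$, this gives $d - e \le \e$ and hence $\lfloor(c-b-2+d-e)/\e\rfloor \le \lfloor(c-b-2)/\e\rfloor + 1$, which is the claimed $+3$ bound. The mirror sub-case uses the analogous estimate $a \le \e$, derived from $2a-1 \le b$ together with the failure of (ii) (which forces $a > b - (d-c)$ and $a \le (b+1)/2$, hence $a \le d - c$). The main technical subtlety I expect is the residue check in the $r$-copies-of-$d$ construction: it hinges on the tight estimate $r(d-c) \le \alpha - b + (d-c) - 1$ that makes the condition $b-a \ge d-c-1$ both necessary and sufficient for $\alpha - r(d-c)$ to land in $[a,b]$.
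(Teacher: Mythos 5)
Your proposal is correct and follows essentially the same route as the paper's own proof: the same reduction to $\reg(R)=\min\{n\ge 2\mid [nb,nc]\subseteq nG_M\}$ via the Macaulayfication $G_N=\{0,d\}\cup[a,e]$, the same decomposition $\nu=\alpha+pb+qc$ with $p+q=m-1$, the same floor-function bounds on $p$ and $q$ drawn from Lemmas \ref{fraction1}(2) and \ref{fraction2}(2) and from the inclusion $[c,c+b]\subseteq 2G_M$ when $a-1\le e-c$, and the same case analysis for (1) and (2). The only divergences are minor: your explicit dichotomy that the hypotheses $2a-1\le b$ and $c+d-1\le 2e$ force $a-1\le e-c$ or $d-e-1\le b-a$ spells out a step the paper disposes of with a terse symmetry remark, and your ``$r$-copies-of-$d$'' decomposition in case (2)(ii) is a mild variant of the paper's observation that $b-a\ge d-c$ already yields $d-e-1\le b-a$ and hence the dual bound $q\le\left\lfloor(\alpha-b-1)/\e\right\rfloor$.
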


\begin{proof}
By the proof of Theorem \ref{regB}, we have 
$\reg(R) = \min\{n|\ [nb,nc] \subseteq nG_M\},$
where $G_M = \{0,d\} \cup [a,b] \cup [c,e]$.

Let $m = \reg(R)-1$ and $\nu \in [mb,mc]$ such that $\nu \not\in mG_M$. 
Since $[mb,mc] = [b,c] + (m-1)\{b,c\}$, there exists $\a \in  [b,c]$ such that 
$\nu = \a + pb + qc$ for some $p,q \ge 0$, $p + q = m-1$.
The condition $\nu \not\in mG_M$ implies $\a + pb \not\in (p+1)G_M$. 

If $a - 1 \le e-c$, then $c+a-1 \le e$.
This implies $[c,c+b] \subseteq [c,e] \cup [c+a,c+b] \subseteq 2G_M$.
Let $t = \left \lfloor \dfrac{c-\a-1}{b} \right \rfloor+1$. Then  
$c-\a \le  tb  \le c+b -\a -1$. Hence $\a + tb \in [c,c+b] \subseteq 2G_M \subseteq (t+1)G_M$. 
Therefore, $\a + tb \in (t+1)G_M$. 
From this it follows that $\a + nb \in (n+1)G_M$ for $n \ge t$. Hence
\begin{equation}
p \le \left \lfloor \dfrac{c-\a-1}{b} \right \rfloor \le \left \lfloor \dfrac{c-\a-1}{\e} \right \rfloor.
\end{equation}
As in the proof of Theorem \ref{regB}(1), we have
$q \le \left \lfloor \dfrac{\a - b + d-e -1}{\e} \right \rfloor.$
Therefore,
$$p+q \le \dfrac{c-\a-1}{\e} + \dfrac{\a-b+d-e-1}{\e} = \dfrac{c-b+d-e-2}{\e}.$$
Since $\reg(R) = p+q+2$,  we obtain
\begin{equation}
\reg(R) \le \left \lfloor \dfrac{c - b +d-e-2}{\e} \right \rfloor + 2.
\end{equation}
By Theorem \ref{regB}(2), this bound also holds if $b - a \ge d-c$.

Note that the assumption of Theorem \ref{regB1} is symmetric if we replace $a,b,c,e$ by $d-e,d-c,d-b,d-a$. Then we may assume without restriction that $\e = d-c$.
Since $d-e < d-c$, (3.4) implies 
$$\reg(R) \le \left\lfloor \dfrac{c - b +d-e-2}{d-c} \right\rfloor + 2 \le \left \lfloor \dfrac{c - b -2}{d-c} \right\rfloor +3.$$
Therefore, to prove the first bound of (1) we may further assume that $a - 1 > e-c$ and $b-a < d-c$.
Note that $a - 1 >  e-c$ is the symmetric condition of $a - 1 \le e-c$. 
Then we can replace $b,c,d-e$ in (3.4) by $d-c,d-b,a$ to obtain
$$
\reg(R) \le \left\lfloor \dfrac{c-b+a-2}{\e} \right\rfloor + 2 = \left\lfloor \dfrac{c-b+a-2}{d-c} \right\rfloor+2.
$$
Since $a \le b-a + 1 \le d-c$, this implies
$\reg(R) \le \left\lfloor \dfrac{c-b-2}{\e} \right\rfloor + 3.$ 

By Theorem \ref{regB}(4), we have
\begin{align*}
\reg(R) & \ge \max\left\{\left\lfloor\dfrac{c-2}{b}\right\rfloor,\left\lfloor\dfrac{d-b-2}{d-c}\right\rfloor\right\}+1\\
& = \max\left\{\left\lfloor\dfrac{c-b-2}{b}\right\rfloor,\left\lfloor\dfrac{c-b-2}{d-c}\right\rfloor\right\}+2
= \left\lfloor \dfrac{c-b-2}{\e}\right \rfloor + 2.
\end{align*}
This proves the second bound of (1). To prove (2) we only need to show that $\reg(R) \le  \left\lfloor \dfrac{c-b-2}{\e}\right \rfloor + 2$ if one of the conditions (i)-(iii) is satisfied.

If $d-e-1 \le b-a$, we can replace $p,c,\a$ in (3.3) by $q,d-b,d-\a$ to obtain $q \le \left \lfloor \dfrac{\a - b -1}{\e} \right \rfloor$. Therefore, if $a - 1 \le e-c$ and $d-e-1 \le b-a$, we have
$$p + q \le \dfrac{c-\a-1}{\e}  + \dfrac{\a-b-1}{\e} = \dfrac{c-b-2}{\e}.$$
Hence, $\reg(R) = p+q+2 \le  \left\lfloor \dfrac{c-b-2}{\e}\right \rfloor + 2.$ 

If $b-a \ge d-c$, then $\e = d-c$ and  $c - \a-1 < n\e \le n(b-a)$ for $n \ge \left\lfloor \dfrac{c - \a - 1}{\e}\right\rfloor+1$. From this it follows that $na \le \a + nb - c \le nb$. Hence $\a + nb - c \in [na,nb] \subseteq nG_M$. Thus, $\a + nb \in nG_M + c \subseteq (n+1)G_M$. Therefore, the condition $\a + pb \not\in (p+1)G_M$ implies $p \le  \left\lfloor \dfrac{c- \a - 1}{\e}\right\rfloor$. 
Since $d-e-1 \le d-c \le b-a < b$, we have $q \le \left \lfloor \dfrac{\a - b -1}{\e} \right \rfloor$ as shown in the preceding paragraph. Therefore,
$$p + q \le \dfrac{c-\a-1}{\e}  + \dfrac{\a-b-1}{\e} = \dfrac{c-b-2}{\e},$$
which implies $\reg(R) \le  \left\lfloor \dfrac{c-b-2}{\e}\right \rfloor + 2.$ 

The symmetric condition of $b-a \ge d-c$ is $e-c \ge b$. Replacing $b,c$ by $d-c,d-b$ in the above bound, we also have $\reg(R) \le  \left\lfloor \dfrac{c-b-2}{\e}\right \rfloor + 2$ if $e-c \ge b$. This concludes the proof of (2).
\end{proof}

Condition (i) of Theorem \ref{regB1}(2) is satisfied if $b-a = e-c$. In fact, the assumption $2a-1 \le b$ and $c+d-1 \le 2e$ implies $a-1 \le b-a = e-c$ and $d-e-1 \le e-c = b-a$.

\begin{ex}
Let $R = k[x^d,x^{d-2}y^2,x^{d-3}y^3,x^3y^{d-3},x^2y^{d-2},y^d]$, $d \ge 8$. Then 
$a = 2$, $b = 3$, $c = d-3$, $e = d-2$, which satisfy the assumption of Theorem \ref{regB1}. 
Since $b-a = c-e$, $\reg(R) = \left \lfloor \dfrac{c-b-2}{\e}\right \rfloor + 2 = \left \lfloor \dfrac{d-2}{3}\right \rfloor.$
\end{ex}

The bound $\reg(R) \le \left \lfloor \dfrac{c-b-2}{\e}\right \rfloor + 3$ of Theorem \ref{regB1}(1) can be attained. 

\begin{ex}
Let $R = k[x^{12},x^{10}y^2,x^9y^3,x^5y^7,x^4y^8,x^3y^9,y^{12}]$. Then $a = 3$, $b = 5$, $c = 9$, $e = 10$, $d=12$, which satisfy the assumption of Theorem \ref{regB1}. Hence $\reg(R) \le \left \lfloor \dfrac{c-b-2}{\e}\right \rfloor + 3 = 3$. We claim that $\reg(R) =  3$. By the proof of 
Theorem \ref{regB},  $\reg(R) = \min\{n \ge 2|\ [5n,9n] \subseteq nG_M\}$, where $G_M = \{0,3,4,5,9,10,12\}.$ It is easy to see that $11 \not\in 2G_M$. Hence, $[10,18] \not\subseteq 2G_M$. From this it follows that $\reg(R) \ge 3$, as required.
\end{ex}

\noindent {\bf Acknowledgement}. This paper is partially supported by grant 101.04-2019.313 of Vietnam National Foundation for Science and Technology Development.

%%%%%%%%%%%%%%%%%%%%%%%%%%%%%%%


\begin{thebibliography}{9999}

\bibitem{BGG}  I. Bermejo, E. Garcia-Llorente, and I. Garcia-Marco, 
Algebraic invariants of projective monomial curves associated to generalized arithmetic sequences,
 J. Symbolic Comput. 81 (2017), 1--19. 

\bibitem{Br} H. Bresinsky, 
Monomial Buchsbaum ideals in $\PP^r$,
Manuscripta Math. 47 (1984), 105--132.

\bibitem{BCFH} H. Bresinsky, F. Curtis, M. Fiorentini, and L.T Hoa, 
On the structure of local cohomology modules for monomial curves in ${\Bbb P}^3_K$,
Nagoya Math. J. 136 (1994), 81--114.

\bibitem{BSV} H. Bresinsky, P. Schenzel, and W. Vogel, 
On liaison, arithmetical Buchsbaum curves and monomial curves in $\PP^3$, 
J. Algebra 86 (1984), 283--301.

\bibitem{BGT} W. Bruns, J. Gubeladze, and N.V. Trung, 
Problems and algorithms for affine semigroups,
Semigroup Forum 64 (2002), no. 2, 180--212.

\bibitem{EG} D. Eisenbud and S. Goto, 
Linear free resolutions and minimal multiplicity,
J. Algebra 88 (1984), 89--133. 

\bibitem{FH} M. Fiorentini and L.T. Hoa,
On monomial $k$-Buchsbaum curves in ${\Bbb P}^r$,
Ann. Univ. Ferrara Sez. VII (N.S.) 36 (1991), 159--174.

\bibitem{Go} S. Goto, 
On the Cohen-Macaulayfication of certain Buchsbaum ring, 
Nagoya Math. J. 80 (1980), 107--116.

\bibitem{GSW} S. Goto, N. Suzuki, K. Watanabe, 
On affine semigroup rings,
Japan. J. Math. (N.S.) 2 (1976), 1--12.

\bibitem{GW} S. Goto and K. Watanabe, 
On graded rings, II, Tokyo J. Math. 1-2 (1978), 237--261.

\bibitem{GLP} L. Gruson, R. Lazarsfeld and C. Peskine, 
On a theorem of Castelnuovo, and the equations defining space curves, 
Invent. Math. 72 (1983), no. 3, 491--506. 

\bibitem{HHS} M. Hellus, L. T. Hoa, and J. St\"uckrad, 
Castelnuovo-Mumford regularity and the reduction number of some monomial curves,
Proc. Amer. Math. Soc. 138 (2010), 27--35. 

\bibitem{HS} J. Herzog and D.I. Stamate, 
Cohen-Macaulay criteria for projective monomial curves via Gr\"obner bases,
Acta Math. Vietnam. 44 (2019), 51--64.

\bibitem{HT} L.T. Hoa and N.V. Trung, Affine semigroups and Cohen-Macaulay rings generated by monomials, Trans. Amer. Math. Soc. 298 (1986), 145--167.

\bibitem{JS} M. Javanbakht and L. Sharifan, 
Algebraic invariants of certain projective monomial curves, 
Beitr. Algebra Geom. 60 (2019), 783--795. 

\bibitem{La} T.T.G. Lam, 
On the reduction numbers and the Castelnuovo-Mumford regularity of projective monomial curves, Preprint 2020, arXiv:2103.08099.

\bibitem{Lv} S. L'vovsky, 
On inflection points, monomial curves and hypersurfaces containing projective curves, 
Math. Ann. 306 (1996), 719--735.

\bibitem{Ma} F.S. Macaulay, 
Algebraic theory of modular systems, 
Cambridge Tracts 19, 1916.

\bibitem{MP} J. McCullough and I. Peeva, 
Counterexamples to the Eisenbud-Goto regularity conjecture,
J. Amer. Math. Soc. 31 (2018), 473--496.

\bibitem{Ni1}  M. J. Nitsche,
Castelnuovo-Mumford regularity and reduction number of smooth monomial curves in ${\Bbb P}^5$,
Proc. Amer. Math. Soc. 140 (2012), no. 9, 2937--2944.

\bibitem{Ni2}  M. J. Nitsche, 
A combinatorial proof of the Eisenbud-Goto conjecture for monomial curves and some simplicial semigroup rings,
J. Algebra 397 (2014), 47--67. 

\bibitem{SV}  J. St\"uckrad, W. Vogel, 
Buchsbaum rings and applications. An interaction between algebra, geometry, and topology.
Mathematische Monographien 21, VEB Deutscher Verlag der Wissenschaften, Berlin, 1986. 

\bibitem{Tr} N. V. Trung Classification of the double projections of Veronese varieties. J. Math. Kyoto Univ. 22 (1982/83), no. 4, 567--581.

\bibitem{Tr1} N. V. Trung, 
Projections of one-dimensional Veronese varieties,
Math. Nachr. 118 (1984), 47--67. 
 
\bibitem{Tr2}  N. V. Trung, 
Reduction exponent and degree bound for the defining equations of graded rings,
 Proc. Amer. Math. Soc. 101 (1987), 229--236. 

\bibitem{Tr3} N. V. Trung, 
Castelnuovo-Mumford regularity and related invariants, 
in: Commutative algebra and combinatorics, Ramanujan Math. Soc. Lect. Notes Ser. 4, 2007, 157--180. 

\end{thebibliography}
\end{document}